\documentclass[12pt]{amsart}

\usepackage{tikz}
\usepackage{comment}

\addtolength{\hoffset}{-1.5cm}
\addtolength{\textwidth}{3cm}
\addtolength{\voffset}{-1cm}
\addtolength{\textheight}{2cm}

\newtheorem{thm}{Theorem}
\newtheorem{lem}{Lemma}
\newtheorem{corollary}{Corollary}
\newtheorem{obs}{Observation}
\newtheorem{prop}{Proposition}

\DeclareMathOperator{\tp}{top}
\DeclareMathOperator{\bt}{bot}
\DeclareMathOperator{\ar}{area}

\newcommand{\Sos}{\text{S\'os}}

\title{S\'os Permutations}
\markright{S\'os Permutations}
\author[S. Bockting-Conrad, Y. Kashina, T. K. Petersen,\\and B. E. Tenner]{Sarah Bockting-Conrad, Yevgenia Kashina, T. Kyle Petersen,\\and Bridget Eileen Tenner}
\thanks{Work of Petersen was partially supported by Simons Foundation Collaboration Grant for Mathematicians 353772. Work of Tenner was partially supported by Simons Foundation Collaboration Grant for Mathematicians 277603.}
\subjclass[2020]{Primary: 05A05; Secondary: 11B57, 11K06}

\begin{document}

\maketitle

\begin{abstract}
Let $f(x) = \alpha x + \beta \mod 1$ for fixed real parameters $\alpha$ and $\beta$. For any positive integer $n$, define the \emph{S\'os permutation} $\pi$ to be the lexicographically first permutation such that $0 \leq f(\pi(0)) \leq f(\pi(1)) \leq \cdots \leq f(\pi(n)) < 1$. In this article we give a bijection between S\'os permutations and regions in a partition of the parameter space $(\alpha,\beta)\in [0,1)^2$. This allows us to enumerate these permutations and to obtain the following ``three areas'' theorem: in any vertical strip $(a/b,c/d)\times [0,1)$, with $(a/b,c/d)$ a Farey interval, there are at most three distinct areas of regions, and one of these areas is the sum of the other two.
\end{abstract}

\section{Introduction}

Let $y = \alpha x + \beta$ be a line in the plane. Let $f=f_{\alpha,\beta}$ be the function that returns the fractional part of a $y$-value on this line, i.e.,
\[
 f(x) = \alpha x + \beta \mod 1 .
\]
Note that because of the reduction to fractional parts, we can assume $\alpha, \beta \in [0,1)$.

Now fix a positive integer $n$, and consider the list $(f(0),f(1),\ldots,f(n))$, which consists of some numbers between $0$ and $1$. We can put these numbers in order by sorting them with a permutation $\pi:\{0,1,\ldots,n\} \to \{0,1,\ldots,n\}$ such that
\[
 f(\pi(0)) \leq f(\pi(1)) \leq \cdots \leq f(\pi(n)).
\]
Generically speaking, all inequalities will be strict and such a $\pi$ is uniquely determined. In the case of a tie, we choose $\pi=\pi_{\alpha,\beta}^{(n)}$ to be the lexicographically first such permutation. We call a permutation arising in this way a \emph{S\'os permutation}. We denote the set of S\'os permutations of size $n$ by
\[
 \Sos_n  = \left\{ \pi_{\alpha,\beta}^{(n)} : \alpha, \beta \in [0,1) \right\}.
\]
Our choice of terminology will be explained shortly.

As an example of a S\'os permutation, consider $\alpha = .44$, $\beta = .32$, and $n=7$. Then $f(x) = .44x + .32 \mod 1$ and evaluating $f$ at the integers from $0$ to $7$ we find
{\small
\[
  (f(0), f(1), f(2), f(3), f(4), f(5), f(6), f(7)) = (.32, .76, .20, .64, .08, .52, .96, .40). \]
}
Sorting this list, we find
\begin{equation*}
\mbox{\small $\displaystyle  (.08, .20, .32, .40, .52, .64, .76, .96) = (f(4), f(2), f(0), f(7), f(5), f(3), f(1), f(6)), $}
\end{equation*}
which corresponds to the permutation $\pi = 42075316$. We see this example illustrated in Figure \ref{fig:linex}.

\begin{figure}
\[
\begin{tikzpicture}
\draw (0,6) node[scale=.8] (a)  {
 \begin{tikzpicture}[scale=1.5]
\draw[step=1cm, gray, very thin] (-.1,-.1) grid (7.9,3.9);
\draw[->,very thick] (-.1,0) -- (7.9,0) node[right] {$x$};
\draw[->,very thick] (0,-.1) -- (0,4) node[right] {$y$};
\draw[red, thick] (0,.32)--(7.5,3.62);
\foreach \x in {1,...,7}{
 \draw (\x,0) node[below] {$\x$};
 }
\foreach \x in {1,2,3}{
 \draw (0,\x) node[left] {$\x$};
 }
\draw (0,.32) node[circle, fill=red, draw = none, inner sep=2] {};
\draw (1,.76) node[circle, fill=red, draw = none, inner sep=2] {};
\draw (2,1.20) node[circle, fill=red, draw = none, inner sep=2] {};
\draw (3,1.64) node[circle, fill=red, draw = none, inner sep=2] {};
\draw (4,2.08) node[circle, fill=red, draw = none, inner sep=2] {};
\draw (5,2.52) node[circle, fill=red, draw = none, inner sep=2] {};
\draw (6,2.96) node[circle, fill=red, draw = none, inner sep=2] {};
\draw (7,3.4) node[circle, fill=red, draw = none, inner sep=2] {};
\end{tikzpicture}
};
\draw (0,0)  node[scale=.8] (b) {
\begin{tikzpicture}[xscale=1.5,yscale=4.5]
\draw[step=1cm, gray, very thin] (-.1,-.1) grid (7.9,1.1);
\draw[->,very thick] (-.1,0) -- (7.9,0) node[right] {$x$};
\draw[->,very thick] (0,-.1) -- (0,1.1) node[right] {$y$};
\draw[red, thick] (0,.32)--(1.5454,1);
\draw[red, thick] (1.5454,0)--(3.8182,1);
\draw[red, thick] (3.8182,0)--(6.09091,1);
\draw[red, thick] (6.09091,0)--(7.5,0.62);
\foreach \x in {1,...,7}{
 \draw (\x,0) node[below] {$\x$};
 }
\draw (0,1) node[left] {$1$};
\draw (0,.32) node[circle, fill=red, draw = none, inner sep=2] {};
\draw (1,.76) node[circle, fill=red, draw = none, inner sep=2] {};
\draw (2,.20) node[circle, fill=red, draw = none, inner sep=2] {};
\draw (3,.64) node[circle, fill=red, draw = none, inner sep=2] {};
\draw (4,.08) node[circle, fill=red, draw = none, inner sep=2] {};
\draw (5,.52) node[circle, fill=red, draw = none, inner sep=2] {};
\draw (6,.96) node[circle, fill=red, draw = none, inner sep=2] {};
\draw (7,.4) node[circle, fill=red, draw = none, inner sep=2] {};
\draw[very thick] (7.5,-.1)--(7.5,1.1);
\draw[dashed, thin] (0,.32) -- (7.5,.32) node[circle, fill=red, draw = none, inner sep=2] {} node[right] {$0$};
\draw[dashed, thin] (1,.76) -- (7.5,.76) node[circle, fill=red, draw = none, inner sep=2] {} node[right] {$1$};
\draw[dashed, thin] (2,.20) -- (7.5,.20) node[circle, fill=red, draw = none, inner sep=2] {} node[right] {$2$};
\draw[dashed, thin] (3,.64) -- (7.5,.64) node[circle, fill=red, draw = none, inner sep=2] {} node[right] {$3$};
\draw[dashed, thin] (4,.08) -- (7.5,.08) node[circle, fill=red, draw = none, inner sep=2] {} node[right] {$4$};
\draw[dashed, thin] (5,.52) -- (7.5,.52) node[circle, fill=red, draw = none, inner sep=2] {} node[right] {$5$};
\draw[dashed, thin] (6,.96) -- (7.5,.96) node[circle, fill=red, draw = none, inner sep=2] {} node[right] {$6$};
\draw[dashed, thin] (7,.4)--(7.5,.4) node[circle, fill=red, draw = none, inner sep=2] {} node[right] {$7$};
\end{tikzpicture}
};
\draw[->,thick] (a)--node[midway, right] {$\mod 1$} (b);
\end{tikzpicture}
\]
\caption{The line $y=.44x+.32$ gives rise to the permutation $\pi=42075316$.}\label{fig:linex}
\end{figure}
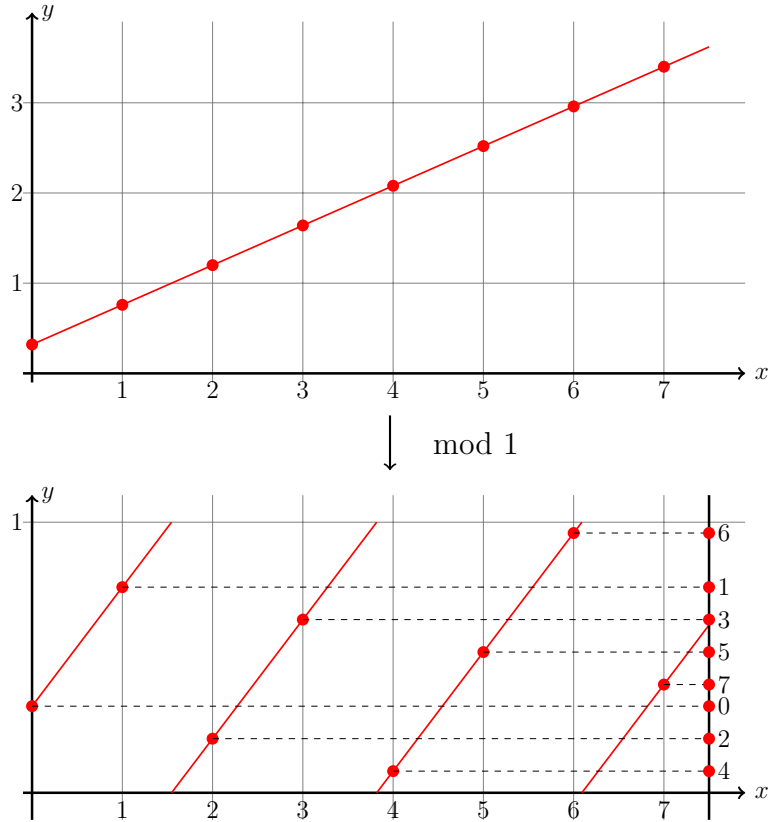

Vertical shifts of our line, that is, increases in $\beta$, cannot change the resulting permutation very much. We clarify our meaning with an example.  Imagine modifying the example discussed above by increasing from $\beta =.32$ to $\beta=.36$. Before, our largest entry was $f_{.44,.32}(6)=.96$, but $f_{.44,.36}(6) = .00$. This shifted the sixth entry in the list from largest to smallest, while keeping all the remaining entries in the same relative order:
\[
(.32, .76, .20, .64, .08, .52, .96, .40) \to (.36, .80, .24, .68, .12, .56, .00, .44),
\]
and so the new permutation is just a cyclic shift of the entries of the old permutation:
\[
 \pi = 42075316 \to 64207531.
\]
To summarize:
\[
 \mbox{vertical translation of the line} \leftrightarrow \mbox{cyclic shift of the permutation}.
\]
A vertical translation of one unit brings the line back to itself mod 1, so eventually these cyclic shifts will return us to the original permutation.

What happens when we vary $\alpha$ is quite a bit more interesting. To get a sense of what happens, it is simplest to consider the $\beta=0$ case. Translating our example with $\alpha=.44$ down to the origin, we have the line $y=.44x$, as drawn with a solid line in Figure \ref{fig:ratslopes}. Through a quick computation (or by our observation about cyclic shifts), we find that this line corresponds to the S\'os permutation $\pi=07531642$.

\begin{figure}
\[
\begin{tikzpicture}
\draw (0,0) node[scale=.75] {
 \begin{tikzpicture}[scale=1.25]
\draw[step=1cm, gray, very thin] (-.9,-.9) grid (7.9,7.9);
\draw[->,very thick] (-1,0) -- (7.9,0) node[right] {$x$};
\draw[->,very thick] (0,-1) -- (0,7.9) node[right] {$y$};
\draw[thick,dashed] (0,0)--(8,8);
\draw[thick,dashed] (0,0)--(8,8/2) node[right,yshift=2mm] {$\frac{1}{2}$};
\draw[thick,dashed] (0,0)--(8,8/3);
\draw[thick,dashed] (0,0)--(8,8/4);
\draw[thick,dashed] (0,0)--(8,8/5);
\draw[thick,dashed] (0,0)--(8,8/6);
\draw[thick,dashed] (0,0)--(8,8/7);
\draw[thick,dashed] (0,0)--(8,16/7);
\draw[thick,dashed] (0,0)--(8,16/5);
\draw[thick,dashed] (0,0)--(8,16/3);
\draw[thick,dashed] (0,0)--(8,24/7) node[right, yshift=-.1cm] {$\frac{3}{7}$};
\draw[thick,dashed] (0,0)--(8,24/5);
\draw[thick,dashed] (0,0)--(8,24/4);
\draw[thick,dashed] (0,0)--(8,32/7);
\draw[thick,dashed] (0,0)--(8,32/5);
\draw[thick,dashed] (0,0)--(8,40/7);
\draw[thick,dashed] (0,0)--(8,40/6);
\draw[thick,dashed] (0,0)--(8,48/7);
\foreach \x in {1,...,7}{
 \draw (\x,0) node[below] {$\x$};
 }
\foreach \x in {1,...,7}{
 \draw (0,\x) node[left] {$\x$};
 }
\draw (1,1) node[circle, fill=black, draw = none, inner sep=2] {};
\draw (2,1) node[circle, fill=black, draw = none, inner sep=2] {};
\draw (3,1) node[circle, fill=black, draw = none, inner sep=2] {};
\draw (4,1) node[circle, fill=black, draw = none, inner sep=2] {};
\draw (5,1) node[circle, fill=black, draw = none, inner sep=2] {};
\draw (6,1) node[circle, fill=black, draw = none, inner sep=2] {};
\draw (7,1) node[circle, fill=black, draw = none, inner sep=2] {};
\draw (7,2) node[circle, fill=black, draw = none, inner sep=2] {};
\draw (5,2) node[circle, fill=black, draw = none, inner sep=2] {};
\draw (3,2) node[circle, fill=black, draw = none, inner sep=2] {};
\draw (7,3) node[circle, fill=black, draw = none, inner sep=2] {};
\draw (5,3) node[circle, fill=black, draw = none, inner sep=2] {};
\draw (4,3) node[circle, fill=black, draw = none, inner sep=2] {};
\draw (7,4) node[circle, fill=black, draw = none, inner sep=2] {};
\draw (5,4) node[circle, fill=black, draw = none, inner sep=2] {};
\draw (7,5) node[circle, fill=black, draw = none, inner sep=2] {};
\draw (6,5) node[circle, fill=black, draw = none, inner sep=2] {};
\draw (7,6) node[circle, fill=black, draw = none, inner sep=2] {};
\draw[red, very thick] (0,0)--(8.5,3.74) node[right,color=black] {$y=.44x$};
\draw (0,0) node[circle, fill=red, draw = none, inner sep=2] {};
\draw (1,.44) node[circle, fill=red, draw = none, inner sep=2] {};
\draw (2,.88) node[circle, fill=red, draw = none, inner sep=2] {};
\draw (3,1.32) node[circle, fill=red, draw = none, inner sep=2] {};
\draw (4,1.76) node[circle, fill=red, draw = none, inner sep=2] {};
\draw (5,2.2) node[circle, fill=red, draw = none, inner sep=2] {};
\draw (6,2.64) node[circle, fill=red, draw = none, inner sep=2] {};
\draw (7,3.08) node[circle, fill=red, draw = none, inner sep=2] {};
\end{tikzpicture}
};
\end{tikzpicture}
\]
\caption{The $\beta=0$ case and rational slopes with denominator at most $n=7$. Each cone corresponds to precisely one permutation. Note that since $\beta=0$, we have $\pi(0)=0$.}\label{fig:ratslopes}
\end{figure}
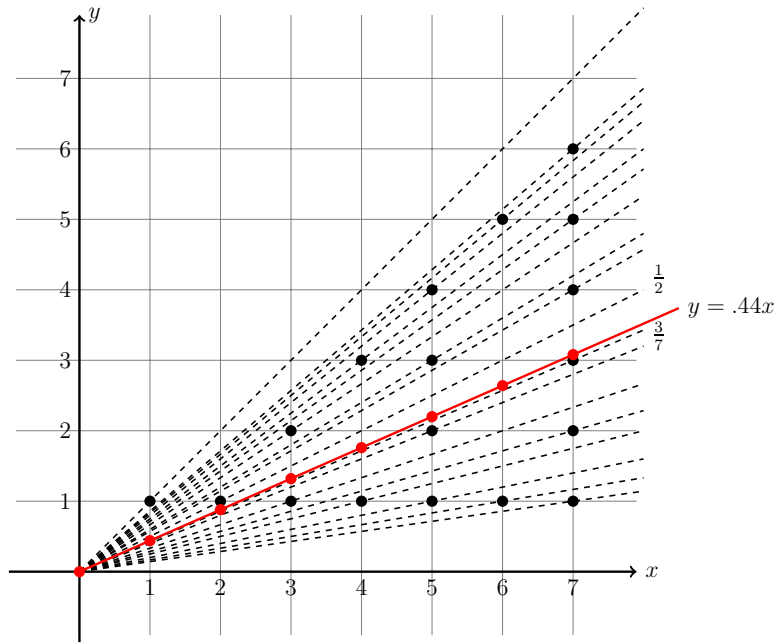

From the picture it seems plausible that a small change in the slope $\alpha$ will not change the permutation $\pi$. In fact, any slope $\alpha$ with $3/7 < \alpha < 1/2$ yields the same permutation $\pi$, though this is not immediately obvious. What is more clear is that increasing the slope to just a bit above $1/2$ would force $f(2)$ to switch from being the largest entry to being the smallest nonzero entry. Similarly we see that decreasing the slope to just a bit below $3/7$ would force $f(7)$ to switch from being the smallest nonzero entry to being the largest.

It is natural to wonder which permutations can arise in this fashion, and we find that it is far from all of them. Rather than all $(n+1)!$ permutations of $\{0,1,2,\ldots,n\}$, there are roughly $n^3$ S\'os permutations. The sequence of numbers $|\Sos_n|$, $n\geq 1$, begins
\[
 2, 6, 16, 30, 60, 84, 144, 198, 280, 352,\ldots.
\]
We see the $n$th term has a factor of $n+1$, which we will show corresponds to the cyclic shifts coming from translation by $\beta$. Now if we compute $|\Sos_n|/(n+1)$, the sequence begins
\[
 1, 2, 4, 6, 10, 12, 18, 22, 28, 32,\ldots,
\]
which is entry A002088 in Sloane's On-Line Encyclopedia of Integer Sequences \cite{OEIS}.  (We have added $|\Sos_n|$ to OEIS as entry A330503.) This entry tells us that these numbers count the number of intervals in the $n$th \emph{Farey sequence}, i.e., the sequence of reduced fractions between $0$ and $1$ with denominators at most $n$.

\subsection*{The three gaps theorem}

This paper is not the first to study S\'os permutations, but few have made them the main focus of study. S\'os permutations seem to have first appeared in work of Vera S\'os from 1958 \cite{Sos}. There the permutations played an important role in her proof of Steinhaus's ``three gaps conjecture,'' which states that (when $\beta=0$) the points $f(0), f(1),\ldots,f(n)$ partition the unit interval into subintervals of at most three different sizes.

\begin{thm}[Three Gaps Theorem]\label{thm:threegaps}
Let $\alpha$ be a real number, let $f(x)= \alpha x \mod 1$, and let $n$ be a natural number. Upon sorting the values $f(0), f(1),\ldots,f(n)$ via
\[
 0 = f(0) \leq f(\pi(1)) \leq \cdots \leq f(\pi(n)) < 1,
\]
the gaps $\delta_k=f(\pi(k+1))-f(\pi(k))$, where $k=0,\ldots,n-1$, and $\delta_n=1-f(\pi(n))$ attain at most three distinct values. In particular, for all $k$,
\[
 \delta_k \in \{ \delta_0, \delta_n, \delta_0+\delta_n \}.
\]
\end{thm}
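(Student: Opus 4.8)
The plan is to pass to the circle $\R/\Z$ and study the $n+1$ points $x_k = k\alpha \bmod 1$ attached to the function $f(x)=\alpha x\bmod 1$ of the theorem, determining for each $k$ exactly which point is the immediate clockwise neighbor of $x_k$ (that is, the right endpoint of the gap beginning at $x_k$). First I would dispose of a degenerate case: if $\alpha = p/q$ in lowest terms with $q\le n$, then $\{x_0,\dots,x_n\}=\{0,1/q,\dots,(q-1)/q\}$, every gap equals $1/q$, and the theorem is immediate; otherwise the $x_k$ are pairwise distinct, and I proceed under that hypothesis.

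Next I would name the two basic quantities. Let $a\in\{1,\dots,n\}$ be the index with $x_a$ the first point met going clockwise (in the direction of increasing coordinate) from $x_0=0$; then $\delta_0 = x_a = a\alpha\bmod 1 = \min_{1\le k\le n}(k\alpha\bmod 1)$. Symmetrically let $b\in\{1,\dots,n\}$ be the index with $x_b$ the first point met going counterclockwise from $x_0$; then $x_b = b\alpha\bmod 1 = \max_{1\le k\le n}(k\alpha\bmod 1)$ and $\delta_n = 1-x_b$, so $b\alpha\equiv -\delta_n\pmod 1$. A short lemma gives $a+b\ge n+1$: otherwise $a+b\in\{2,\dots,n\}$, yet since $a\alpha\equiv\delta_0$ and $b\alpha\equiv 1-\delta_n\pmod 1$, the residue of $(a+b)\alpha$ in $[0,1)$ is $\delta_0-\delta_n<\delta_0$ if $\delta_0>\delta_n$, is $1+\delta_0-\delta_n>1-\delta_n$ if $\delta_0<\delta_n$, and is $0=x_0$ if $\delta_0=\delta_n$ — all three impossible by minimality of $\delta_0$, maximality of $1-\delta_n$, or distinctness.

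The heart of the argument is the claim that for every $k\in\{0,\dots,n\}$ the immediate clockwise neighbor of $x_k$ is $x_{k+a}$ if $k+a\le n$, is $x_{k-b}$ if $k-b\ge 0$, and is $x_{k+a-b}$ otherwise; hence the gap starting at $x_k$ has length $\delta_0$, $\delta_n$, or $\delta_0+\delta_n$ respectively. Because $a+b\ge n+1$, the index sets $\{0,\dots,n-a\}$, $\{n-a+1,\dots,b-1\}$, $\{b,\dots,n\}$ are disjoint and partition $\{0,\dots,n\}$, so the trichotomy is consistent and covers every point. To prove a given case I suppose some other point $x_m$ ($0\le m\le n$) lies strictly inside the claimed arc; since that arc has the asserted length, the clockwise distance from $x_k$ to $x_m$ — or from $x_m$ to the claimed right endpoint — equals $j\alpha\bmod 1$ for an explicit integer $j$, and a check of the index ranges (splitting on whether $m<k$ or $m>k$) forces $j\in\{1,\dots,n\}$ with $j\alpha\bmod 1<\delta_0$ or $>1-\delta_n$, contradicting minimality or maximality. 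Granting the claim, the gaps $\delta_0,\dots,\delta_n$ take only the values $\delta_0$, $\delta_n$, $\delta_0+\delta_n$, so at most three distinct values occur (only two when $a+b=n+1$), which is the theorem. This is, in essence, the structure underlying S\'os's original argument.

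I expect the real obstacle to be the bookkeeping in the third case — the arc of length $\delta_0+\delta_n$ — where ruling out an interloping point requires several subcases and careful use of the constraints $n-a<k<b$ together with $a+b\ge n+1$ to keep every auxiliary index inside $\{1,\dots,n\}$; the first two cases are quick by comparison. An alternative that trades one kind of bookkeeping for another is induction on $n$: the new point $x_{n+1}$ falls in exactly one existing gap and subdivides it into two pieces, and one shows the split occurs at the longer of the two gaps flanking $x_0$ and produces lengths preserving the relation ``one of the three values is the sum of the other two'' — but this requires tracking how $a$, $b$, $\delta_0$, $\delta_n$ change as $n$ grows.
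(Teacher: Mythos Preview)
Your proposal is correct: defining $a$ and $b$ via the extremal fractional parts, proving $a+b\ge n+1$, and then establishing that the clockwise successor of $x_k$ is $x_{k+a}$, $x_{k-b}$, or $x_{k+a-b}$ according to the trichotomy, is precisely S\'os's recurrence and yields the three gaps immediately. Your flagged obstacle in the third case is real but surmountable; the key is that when the naive index $j=\pm(m-k)$ fails to land in $\{1,\dots,n\}$ or fails to give a small/large residue, a shift by $\pm a$ or $\pm b$ (e.g., $j=b-k+m$ when $m<k$ and $\ell\ge\delta_n$) does the job.

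The paper, however, arrives at the same recurrence by a genuinely different route. Rather than work with an arbitrary $\alpha$, it first proves Sur\'anyi's bijection (Theorem~\ref{thm:suranyi}) showing that $\pi_\alpha^{(n)}$ is constant on each Farey interval $(a/b,c/d)$, and that your extremal indices $a,b$ are exactly the Farey denominators $b,d$. This earns the freedom to choose $\alpha$, and the paper picks the mediant $\alpha=(a+c)/(b+d)$, so that $f(i)=\bigl(i(a+c)\bmod(b+d)\bigr)/(b+d)$ becomes pure modular arithmetic. Since $b(a+c)\equiv 1$ and $d(a+c)\equiv -1\pmod{b+d}$, the pigeonhole principle (using $n\ge\max(b,d)\ge(b+d)/2$) forces every consecutive gap to be $1/(b+d)$ or $2/(b+d)$, and the recurrence drops out with no case-splitting on the position of an interloper $x_m$. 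What you gain by your direct approach is self-containment: no Farey machinery, and your inequality $a+b\ge n+1$ is proved intrinsically rather than imported from Proposition~\ref{prp:fareyfacts}. What the paper gains is that the mediant trick replaces your delicate third-case bookkeeping with a one-line pigeonhole argument.
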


In the original framing of the problem, the values $\alpha, 2\alpha,\ldots, n\alpha$  were arc lengths measured on a circle of circumference 1. As a shift by $\beta$ would merely correspond to simultaneously rotating these points, the three gaps theorem holds for any $f(x)=\alpha x + \beta \mod 1$, so long as we define the gaps appropriately near 0. See Figure \ref{fig:circle}.

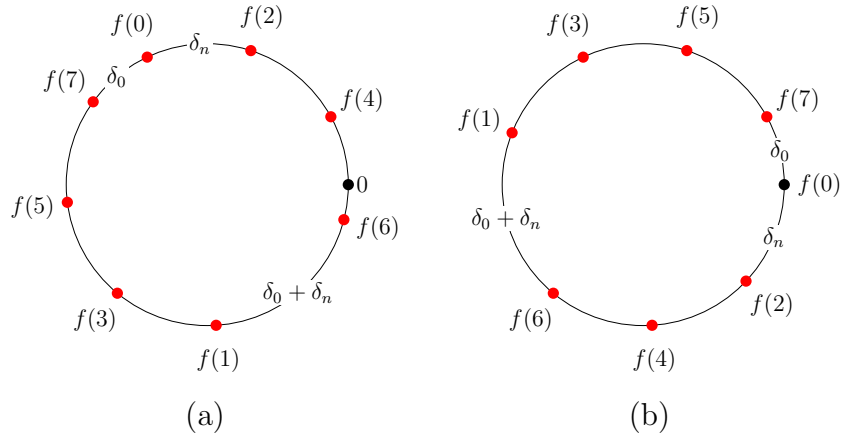
\begin{figure}
\begin{tabular}{c c}
\begin{tikzpicture}
\draw (0,0) node[scale=.75] {
\begin{tikzpicture}[scale=1.25]
\draw (0,0) circle (2);
\draw (0:2) node[fill=black,inner sep=2, circle] {} node[right] {$0$};
\draw (115.2:2) node[fill=red,inner sep=2, circle] {};
\draw (115.2:2.5) node {$f(0)$};
\draw (273.6:2) node[fill=red,inner sep=2, circle] {};
\draw (273.6:2.5) node {$f(1)$};
\draw (72:2) node[fill=red,inner sep=2, circle] {};
\draw (72:2.5) node {$f(2)$};
\draw (230.4:2) node[fill=red,inner sep=2, circle] {};
\draw (230.4:2.5) node {$f(3)$};
\draw (28.8:2) node[fill=red,inner sep=2, circle] {};
\draw (28.8:2.5) node {$f(4)$};
\draw (187.2:2) node[fill=red,inner sep=2, circle] {};
\draw (187.2:2.5) node {$f(5)$};
\draw (345.6:2) node[fill=red,inner sep=2, circle] {};
\draw (345.6:2.5) node {$f(6)$};
\draw (144:2) node[fill=red,inner sep=2, circle] {};
\draw (144:2.5) node {$f(7)$};
\draw (130:2) node[fill=white, inner sep=1] {$\delta_0$};
\draw (93:2) node[fill=white, inner sep=1] {$\delta_n$};
\draw (310:2) node[fill=white, inner sep=1] {$\delta_0+\delta_n$};
\end{tikzpicture}
};
\end{tikzpicture}
&
\begin{tikzpicture}
\draw (0,0) node[scale=.75] {
\begin{tikzpicture}[scale=1.25]
\draw (0,0) circle (2);
\draw (0:2) node[fill=black,inner sep=2, circle] {};
\draw (0:2.5) node {$f(0)$};
\draw (158.4:2) node[fill=red,inner sep=2, circle] {};
\draw (158.4:2.5) node {$f(1)$};
\draw (316.8:2) node[fill=red,inner sep=2, circle] {};
\draw (316.8:2.5) node {$f(2)$};
\draw (115.2:2) node[fill=red,inner sep=2, circle] {};
\draw (115.2:2.5) node {$f(3)$};
\draw (273.6:2) node[fill=red,inner sep=2, circle] {};
\draw (273.6:2.5) node {$f(4)$};
\draw (72:2) node[fill=red,inner sep=2, circle] {};
\draw (72:2.5) node {$f(5)$};
\draw (230.4:2) node[fill=red,inner sep=2, circle] {};
\draw (230.4:2.5) node {$f(6)$};
\draw (28.8:2) node[fill=red,inner sep=2, circle] {};
\draw (28.8:2.5) node {$f(7)$};
\draw (14:2) node[fill=white, inner sep=1] {$\delta_0$};
\draw (-22:2) node[fill=white, inner sep=1] {$\delta_n$};
\draw (194:2) node[fill=white, inner sep=1] {$\delta_0+\delta_n$};
\end{tikzpicture}
};
\end{tikzpicture}
\\
(a) & (b)
\end{tabular}
\caption{The circle model for (a) $f(x)=.44x+.32 \mod 1$ yielding $\pi=42075316$ and (b) $f(x)=.44x \mod 1$ yielding $\pi=07531642$. The gap sizes are clearly the same.}\label{fig:circle}
\end{figure}

The history of the three gaps theorem is quite interesting. Apparently Steinhaus made the conjecture in the mid-1950s, and several people found proofs contemporaneously, with varied techniques. The standard references are papers by Vera S\'os \cite{Sos}, J\'anos Sur\'anyi \cite{Suranyi}, Stanis\l{}aw \'Swierczkowski \cite{swier}, and also John Halton \cite{Halton}, but S\'os and Sur\'anyi also credit Paul Erd\H{o}s, Gy\"orgy Haj\'os, and P\'eter Sz\"usz. Noel Slater had also done work on an equivalent problem in the early 1950s, and outlined the connections between them in a 1967 paper \cite{Slater}. A recent proof using lattice theory comes in a 2017 paper by Jens Marklof and Andreas Str\"ombergsson \cite{MarklofStrombergsson}.

The three gaps theorem is important for its relevance to Diophantine approximation (the gap sizes can be described in terms of convergents of partial fractions), and for its connection to the combinatorics of Sturmian sequences. See \cite{AlessandriBerthe} for a survey of the theorem and some of its applications.

In general, the sequence $\{k \alpha \mod 1: k=1,\ldots,n\}$ has been studied with $\alpha$ fixed and $n$ varying. For example, in \cite{BoydSteele} David Boyd and J. Michael Steele studied the problem of longest increasing subsequences by choosing a fixed irrational $\alpha$ and examining the properties of $\{k \alpha \mod 1: k=1,\ldots,n\}$ as $n\to \infty$. This article takes a different view, with $n$ fixed and $\alpha$ and $\beta$ varying. We are not so much worried about the gaps between the points but the ordering of the points on the unit interval. S\'os gave a precise characterization of these permutations in proving the three gaps theorem, and Sur\'anyi revisited her result in his proof, but also made the connection with Farey sequences. We will review some of their main ideas in this article.

We know of only three papers that study S\'os permutations in their own right. Kevin O'Bryant studies some of the algebraic properties of S\'os permutations, such as their sign, in \cite{Obryant}. In \cite{Cooper}, Joshua Cooper follows up on O'Bryant's work by considering S\'os permutations in relation to quasirandom permutations. Anton Shutov enumerates S\'os permutations in the $\beta=0$ case, in \cite{Shutov}. While this article is partly expository, we do broaden the scope of prior investigations, and we provide self-contained proofs of most results stated. With all the attention that has been given to the three gaps theorem, we think it is time that the set of S\'os permutations gets its day in the sun.

\section{S\'os permutations at $\beta=0$}

In this section we provide some background details and describe the results of Sur\'anyi and S\'os in detail.

\subsection*{Farey sequences}

Here we provide some background for Farey sequences, as found, for example, in Chapter III of Hardy and Wright's book \cite{HardyWright}. The $n$th \emph{Farey sequence}, denoted $F^{(n)}$, is the sequence of all nonnegative fractions $a/b$ such that $0\leq a\leq b \leq n$ and $\gcd(a,b) = 1$, ordered from smallest to largest, e.g.,
\[
F^{(7)} = \left( \frac{0}{1}, \frac{1}{7}, \frac{1}{6}, \frac{1}{5}, \frac{1}{4}, \frac{2}{7}, \frac{1}{3}, \frac{2}{5}, \frac{3}{7}, \frac{1}{2}, \frac{4}{7}, \frac{3}{5}, \frac{2}{3}, \frac{5}{7}, \frac{3}{4}, \frac{4}{5}, \frac{5}{6}, \frac{6}{7}, \frac{1}{1} \right).
\]
There is a straightforward correspondence between the fractions $a/b$ in $F^{(n)}$ and the lines of rational slope, $by=ax$, in the $n \times n$ grid that pass through integer lattice points $(kb,ka)$. These are illustrated with the dots on the dashed lines of Figure \ref{fig:ratslopes}.

An interval $(a/b,c/d)$ is called a \emph{Farey interval} if $a/b$ and $c/d$ are consecutive terms in $F^{(n)}$ for some $n$. In fact, such consecutive terms enjoy many wonderful properties, some of which we collect here.

\begin{prop}\label{prp:fareyfacts}
Suppose $a/b < c/d$ are consecutive terms in $F^{(n)}$. Then:
\begin{enumerate}
\item If $n>1$, then $b\neq d$.\label{prp:fareyfacts1}
\item $bc-ad=1$, i.e., $c/d-a/b=1/bd$.\label{prp:fareyfacts2}
\item If $a'/b < c'/d$ are consecutive terms, then $a'=a$ and $c'=c$, i.e., denominators determine intervals.\label{prp:fareyfacts3}
\item $b+d>n$.\label{prp:fareyfacts4}
\item $a/b < (a+c)/(b+d) < c/d$ are three consecutive terms in $F^{(b+d)}$.\label{prp:fareyfacts5}
\end{enumerate}
\end{prop}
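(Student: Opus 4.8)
The plan is to treat part \eqref{prp:fareyfacts2} (together with part \eqref{prp:fareyfacts4}) as the crux, and to obtain the other three items as short consequences. So the first and main step is to show that consecutive terms $a/b<c/d$ of $F^{(n)}$ satisfy $bc-ad=1$ and $b+d>n$. I would do this by the classical successor construction (cf.\ Hardy and Wright \cite{HardyWright}): since $a/b$ has a successor we may assume $a/b<1$, so $0\le a<b$; the linear Diophantine equation $bx-ay=1$ then has integer solutions, all of the form $(x,y)=(x_0+at,\,y_0+bt)$, and exactly one of them has $n-b<y\le n$ — call it $(c_0,d_0)$. One checks that $\gcd(c_0,d_0)=1$ (immediate from $bc_0-ad_0=1$), that $a/b<c_0/d_0$ (again since $bc_0-ad_0=1>0$), and — the one genuinely substantive point — that no fraction $p/q$ with $q\le n$ satisfies $a/b<p/q<c_0/d_0$. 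For this last point, such a $p/q$ would give $p/q-a/b\ge 1/(qb)$ and $c_0/d_0-p/q\ge 1/(qd_0)$, which combined with $c_0/d_0-a/b=1/(bd_0)$ forces $q\ge b+d_0>n$, a contradiction. Hence $c_0/d_0$ is the successor of $a/b$, so $(c,d)=(c_0,d_0)$, giving \eqref{prp:fareyfacts2}, and $n-b<d_0\le n$ gives \eqref{prp:fareyfacts4}. (Alternatively one can prove \eqref{prp:fareyfacts2} and \eqref{prp:fareyfacts4} simultaneously by induction on $n$, using that $F^{(n+1)}$ is obtained from $F^{(n)}$ by inserting, between each consecutive pair $a/b,c/d$ with $b+d=n+1$, their mediant $(a+c)/(b+d)$.)

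Given \eqref{prp:fareyfacts2}, part \eqref{prp:fareyfacts1} is immediate: if $b=d$ then $bc-ad=b(c-a)=1$ forces $b=1$, so $\{a/b,c/d\}=\{0/1,1/1\}$, and these are consecutive only in $F^{(1)}$, contradicting $n>1$. Part \eqref{prp:fareyfacts3} is also short: if $a'/b<c'/d$ are consecutive, then $bc'-a'd=1=bc-ad$, hence $b(c'-c)=d(a'-a)$; since $\gcd(b,d)=1$ by \eqref{prp:fareyfacts2} we get $b\mid a'-a$, and because $0\le a,a'<b$ this forces $a'=a$ and then $c'=c$.

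For part \eqref{prp:fareyfacts5}, the mediant identities $\tfrac{a+c}{b+d}-\tfrac{a}{b}=\tfrac{bc-ad}{b(b+d)}=\tfrac{1}{b(b+d)}$ and $\tfrac{c}{d}-\tfrac{a+c}{b+d}=\tfrac{1}{d(b+d)}$ (using \eqref{prp:fareyfacts2}) show $a/b<(a+c)/(b+d)<c/d$; moreover any common divisor of $a+c$ and $b+d$ divides $b(a+c)-a(b+d)=1$, so the mediant is reduced with denominator $b+d$ and hence lies in $F^{(b+d)}$, as do $a/b$ and $c/d$ since $b,d\le b+d$. To see the three are consecutive in $F^{(b+d)}$, repeat the gap estimate from the first step: a fraction $p/q$ with $q\le b+d$ lying strictly between $a/b$ and the mediant would force $q\ge 2b+d>b+d$, and strictly between the mediant and $c/d$ would force $q\ge b+2d>b+d$; so nothing intervenes.

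The step I expect to be the real obstacle is the first one — specifically, verifying that the Bézout solution with denominator in the prescribed range is exactly the immediate successor (equivalently, justifying the mediant-insertion description of $F^{(n+1)}$). Once that is established, parts \eqref{prp:fareyfacts1}, \eqref{prp:fareyfacts3}, and \eqref{prp:fareyfacts5} are routine bookkeeping built on the single identity $bc-ad=1$ and the bound $b+d>n$.
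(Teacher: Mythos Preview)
The paper does not actually prove Proposition~\ref{prp:fareyfacts}; it is stated as background material with a pointer to Chapter~III of Hardy and Wright \cite{HardyWright}, and the text moves on immediately to the definition of the mediant. Your argument is correct and is essentially the classical successor construction from that reference: build the B\'ezout solution $(c_0,d_0)$ with $n-b<d_0\le n$, use the gap estimate to show nothing with denominator at most $n$ can intervene, and then read off parts \eqref{prp:fareyfacts1}, \eqref{prp:fareyfacts3}, and \eqref{prp:fareyfacts5} from the identity $bc-ad=1$ together with $b+d>n$. So you have supplied exactly the proof the paper defers to its citation, and there is nothing to compare.
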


The middle fraction in part \ref{prp:fareyfacts5} of Proposition \ref{prp:fareyfacts} is known as the \emph{mediant} $m=(a+b)/(c+d)$ of the interval $(a/b,c/d)$. For example, $m=5/12$ is the mediant of the interval $(2/5,3/7)$.

All the fractions in $F^{(n-1)}$ are again fractions in $F^{(n)}$, and the only way to obtain new fractions is to consider those fractions of the form $a/n$, where $a< n$ and $\gcd(a,n) =1$. Thus,
\[
 |F^{(n)}| = |F^{(n-1)}| + \varphi(n),
\]
where $\varphi(n)$ is Euler's totient function. From this recurrence, it follows by induction that the cardinality of the $n$th Farey sequence is given by
\[
 |F^{(n)}| = 1 + \sum_{k=1}^n \varphi(k),
\]
and the number of Farey intervals is
\begin{equation}\label{eq:fareycount}
 |F^{(n)}| -1=\sum_{k=1}^n \varphi(k).
\end{equation}
A bit of analytic number theory can be used to show this quantity is asymptotically $3n^2/\pi^2$ (see, e.g., \cite[Section 18.5]{HardyWright}).

\subsection*{Sur\'anyi's bijection}

Throughout this section, we focus on the case of $\beta=0$ and explain Sur\'anyi's bijection between the S\'os permutations $\pi_{\alpha,0}^{(n)}$ and intervals in the $n$th Farey sequence. We suppress $\beta=0$ from the notation and simply write $\pi=\pi_{\alpha}^{(n)}$ for now.

Recall that when $n$ is given, any permutation $\pi$ is determined uniquely by its \emph{inversion set}, given by
\[
 I(\pi)=\{ (i,j) : i<j \mbox{ and } \pi(i)>\pi(j)\}.
\]
We will describe S\'os permutations in terms of their inversion sets.

Fix a slope $\alpha$ and an integer $n$.  Let $f=f_{\alpha}(x) = \alpha x \mod 1$ and let $\pi=\pi_{\alpha}^{(n)}$ denote the corresponding S\'os permutation. Here we have $\pi(i) > \pi(j)$ if and only if $f(i)>f(j)$. Thus,
\[
 (i,j) \in I(\pi) \Leftrightarrow f(j)-f(i) < 0.
\]

For any fixed pair $(i,j)$ with $i<j$, define the function $h(\alpha)=f_{\alpha}(j)-f_{\alpha}(i)$. We make a few easy, but important, observations about the function $h$.

\begin{obs}\label{obs:h}
Fix a pair of integers $(i,j)$ with $1\leq i<j \leq n$ and consider the function $h(\alpha)=f_{\alpha}(j)-f_{\alpha}(i)$ on the interval $(0,1)$. Then:
\begin{enumerate}
\item $h$ is a piecewise linear function of slope $(j-i)$.\label{obs:h1}
\item If $h$ is discontinuous at $\alpha$, then $\alpha = a/b$ is rational and either $i$ is a multiple of $b$ or $j$ is a multiple of $b$, but not both. \label{obs:h2}
\item If $h(\alpha)=0$, then $\alpha = a/(j-i)$ for some integer $a$. \label{obs:h3}
\end{enumerate}
\end{obs}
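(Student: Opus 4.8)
The plan is to drop the modular notation in favor of the identity $f_\alpha(x) = \alpha x - \lfloor \alpha x \rfloor$, so that
\[
 h(\alpha) = f_\alpha(j) - f_\alpha(i) = (j-i)\,\alpha - g(\alpha), \qquad \text{where } g(\alpha) := \lfloor \alpha j \rfloor - \lfloor \alpha i \rfloor .
\]
Since $g$ is integer-valued and $(j-i)\alpha$ is continuous, all three parts of Observation~\ref{obs:h} reduce to understanding the step function $g$, and in particular to locating its jumps. On $(0,1)$ the map $\alpha \mapsto \lfloor \alpha j \rfloor$ is piecewise constant, with a jump of exactly $+1$ at each point where $\alpha j \in \Z$ (because $\alpha \mapsto \alpha j$ is strictly increasing) and no other discontinuities; the same holds for $\lfloor \alpha i \rfloor$.

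Parts~\ref{obs:h1} and~\ref{obs:h3} then follow immediately. For~\ref{obs:h1}, $g$ is piecewise constant, so $h(\alpha) = (j-i)\alpha - g(\alpha)$ is affine of slope $j-i$ on each piece. For~\ref{obs:h3}, $h(\alpha) = 0$ means $f_\alpha(i) = f_\alpha(j)$, i.e.\ $\alpha i$ and $\alpha j$ have equal fractional parts, so $\alpha(j-i) = \lfloor \alpha j \rfloor - \lfloor \alpha i \rfloor \in \Z$; taking $a := \alpha(j-i)$ gives $\alpha = a/(j-i)$.

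The substance is in part~\ref{obs:h2}. Because the linear term is continuous, $h$ is discontinuous at $\alpha_0 \in (0,1)$ exactly when $g$ is, and by the jump analysis above this happens precisely when one, but not both, of $\alpha_0 i$ and $\alpha_0 j$ is an integer: if neither is, both floor terms are locally constant; if both are, the two $+1$ jumps cancel and $g$ stays continuous. Such an $\alpha_0$ is rational, and writing $\alpha_0 = a/b$ in lowest terms, the coprimality of $a$ and $b$ makes $\alpha_0 j \in \Z \iff b \mid j$ and $\alpha_0 i \in \Z \iff b \mid i$; hence exactly one of $b \mid i$, $b \mid j$ holds, which is the assertion. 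I do not expect a real obstacle here: the one place that needs care is confirming that simultaneous divisibility $b \mid i$ and $b \mid j$ produces the cancellation of jumps that keeps $h$ continuous, which is exactly what forces the ``but not both.''
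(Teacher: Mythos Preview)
Your proof is correct and is exactly the natural argument one would give. The paper itself does not supply a proof of this observation; it introduces it with ``We make a few easy, but important, observations about the function $h$'' and leaves the verification to the reader, so there is no alternative argument to compare against. Your decomposition $h(\alpha)=(j-i)\alpha-g(\alpha)$ with $g(\alpha)=\lfloor\alpha j\rfloor-\lfloor\alpha i\rfloor$, together with the analysis of where the two floor terms jump, handles all three parts cleanly, including the ``but not both'' clause via cancellation of the $+1$ jumps.
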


Suppose $\alpha\neq \alpha'$ and $\pi_{\alpha}^{(n)} \neq \pi_{\alpha'}^{(n)}$. Then there is some pair $(i,j)$ that is an inversion for, say, $\pi_{\alpha}^{(n)}$ but not for $\pi_{\alpha'}^{(n)}$. This means we have
\[
 h(\alpha) = f_{\alpha}(j) - f_{\alpha}(i) < 0 < f_{\alpha'}(j) - f_{\alpha'}(i) = h(\alpha').
\]
Thus by Observation \ref{obs:h} part \ref{obs:h1}, we know that between $\alpha$ and $\alpha'$ there exists a number $r$ such that either $h(r)=0$ or $h$ is discontinuous at $r$. But by Observation \ref{obs:h} parts \ref{obs:h2} and \ref{obs:h3}, this number $r$ must be rational with denominator at most $j\leq n$, i.e., $r=a/b$ is an element of the Farey sequence $F^{(n)}$. In other words, if $\pi_{\alpha}^{(n)}\neq \pi_{\alpha'}^{(n)}$, then $\alpha$ and $\alpha'$ belong to different Farey intervals. We record the contrapositive of this statement in the following proposition.

\begin{prop}\label{prp:interval}
If $\alpha$ and $\alpha'$ lie in the same Farey interval, then $\pi^{(n)}_{\alpha} = \pi^{(n)}_{\alpha'}$.
\end{prop}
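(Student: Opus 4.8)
The plan is to prove the statement directly, by showing that as $\gamma$ ranges over a fixed Farey interval of $F^{(n)}$ the relative order of the numbers $f_\gamma(0),\dots,f_\gamma(n)$ does not change, so the S\'os permutation $\pi^{(n)}_\gamma$ is constant on that interval. I would fix consecutive terms $a/b<c/d$ of $F^{(n)}$ with $\alpha,\alpha'\in(a/b,c/d)$; we may assume $\alpha\neq\alpha'$, and let $J$ be the closed subinterval of $(a/b,c/d)$ with endpoints $\alpha$ and $\alpha'$. Since $a/b$ and $c/d$ are consecutive in $F^{(n)}$, $J$ contains no element of $F^{(n)}$. I would first record that on $J$ the values $f_\gamma(0),\dots,f_\gamma(n)$ are pairwise distinct: a tie $f_\gamma(i)=f_\gamma(j)$ with $0\le i<j\le n$ forces $\gamma(j-i)\in\Z$, so the reduced denominator of $\gamma$ divides $j-i\le n$ and $\gamma\in F^{(n)}$, impossible on $J$. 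Hence for every $\gamma\in J$ the permutation $\pi^{(n)}_\gamma$ is the \emph{unique} permutation sorting these distinct values (the lexicographic tie-breaking rule never applies), so it is determined by knowing, for each pair $0\le i<j\le n$, whether $f_\gamma(i)<f_\gamma(j)$ or $f_\gamma(i)>f_\gamma(j)$. It thus suffices to show that each such comparison has the same outcome at $\gamma=\alpha$ as at $\gamma=\alpha'$.

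For a pair with $i=0$ this is immediate: by the distinctness just noted and the nonnegativity of $f_\gamma$, we have $f_\gamma(0)=0<f_\gamma(j)$ throughout $J$. For a pair $1\le i<j\le n$, I would apply Observation \ref{obs:h} to $h(\gamma)=f_\gamma(j)-f_\gamma(i)$. By part \ref{obs:h3}, $h$ can vanish only where the reduced denominator of $\gamma$ divides $j-i\le n$; by part \ref{obs:h2}, $h$ can be discontinuous only where the reduced denominator of $\gamma$ divides $i$ or $j$, hence is at most $j\le n$. In either case such a $\gamma$ belongs to $F^{(n)}$ and therefore lies outside $J$. So $h$ is zero-free and continuous on $J$ --- indeed affine there, by part \ref{obs:h1} --- and an affine function without a zero on an interval keeps its sign; thus $f_\alpha(j)-f_\alpha(i)$ and $f_{\alpha'}(j)-f_{\alpha'}(i)$ are nonzero of the same sign. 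Hence every comparison agrees at $\alpha$ and at $\alpha'$, so $\pi^{(n)}_\alpha=\pi^{(n)}_{\alpha'}$.

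This argument essentially repackages the discussion preceding the statement, and I do not expect a genuine obstacle: the content is in the bookkeeping. The one thing to check with care is that every ``bad'' parameter furnished by Observation \ref{obs:h} --- the rationals where $h$ can vanish (reduced denominator dividing $j-i$) or jump (reduced denominator dividing $i$ or $j$) --- has denominator at most $n$, so that it lies in $F^{(n)}$ and is therefore excluded from the open Farey interval containing $\alpha$ and $\alpha'$. One should also remember that pairwise distinctness of the values on $J$ is precisely what makes ``the sorting permutation'' well defined and reduces the claim to individual comparisons. With those points in hand, the conclusion requires nothing beyond the fact that a continuous zero-free function on an interval keeps its sign.
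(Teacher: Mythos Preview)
Your proposal is correct and follows essentially the same route as the paper: both arguments use Observation~\ref{obs:h} to show that for each pair $i<j$ the function $h(\gamma)=f_\gamma(j)-f_\gamma(i)$ can change sign only at an element of $F^{(n)}$, so the relative order of the values is constant on a Farey interval. The paper phrases this as a contrapositive (different permutations force a Farey point between $\alpha$ and $\alpha'$), while you argue directly and add the explicit check that ties do not occur; the content is the same.
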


Now fix a Farey interval $(a/b,c/d)$. Proposition \ref{prp:interval} says  $\pi=\pi_{\alpha}^{(n)}$ is constant for all $a/b< \alpha < c/d$. We make the following observations about $\pi$.

For one thing, imagine $\alpha=a/b+\epsilon$ for some $\epsilon < 1/bn$. Then $f(b) = b\epsilon < 1/n$ is the smallest nonzero entry in $(f(0),f(1),\ldots,f(n))$ since $f(kb) =kb\epsilon > f(b)$ for all positive integers $k<n/b$ and for any $i$ that is not a multiple of $b$, $f(i) = ai/b + i\epsilon >$\ some nonzero integer multiple of $1/b$ \ $\geq 1/n > b\epsilon =f(b)$.  As $f(b)$ is the smallest nonzero entry, we have $\pi(1) =b$.

Now imagine $\alpha= c/d-\epsilon$ for some $\epsilon < 1/dn$. Then by reasoning similar to the above, $f(d)=1-d\epsilon$ is the largest entry in $(f(0),f(1),\ldots,f(n))$, and we can conclude that $\pi(n)=d$.

We summarize this discussion in the next proposition.

\begin{prop}\label{prp:denoms}
If $\alpha$ lies in the Farey interval $(a/b,c/d)$, then $\pi(1) = b$ and $\pi(n)=d$.
\end{prop}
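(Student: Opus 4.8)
Following the discussion that precedes the statement, the plan is to reduce to a single convenient value of $\alpha$ near each endpoint. By Proposition~\ref{prp:interval} the permutation $\pi=\pi_\alpha^{(n)}$ is constant as $\alpha$ ranges over the open Farey interval $(a/b,c/d)$, so for $\pi(1)=b$ it suffices to evaluate at $\alpha = a/b+\epsilon$ for a single small $\epsilon>0$, and for $\pi(n)=d$ to evaluate at $\alpha = c/d-\epsilon$. For the first claim I would take $0<\epsilon<1/(bn)$; since by part~\ref{prp:fareyfacts2} of Proposition~\ref{prp:fareyfacts} the interval has length $1/(bd)$ and $d\le n$, this $\alpha$ indeed lies in $(a/b,c/d)$. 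The core is then a short case check of the values $f(i)=(ai/b+i\epsilon)\bmod 1$ for $0\le i\le n$: we have $f(0)=0$, so $\pi(0)=0$; for a positive multiple $i=kb\le n$ we get $f(kb)=kb\epsilon$ (no wraparound, as $kb\epsilon\le n\epsilon<1$), minimized among these by $f(b)=b\epsilon$; and for $i$ not a multiple of $b$, the hypothesis $\gcd(a,b)=1$ forces $ai/b\notin\Z$, so its fractional part is at least $1/b$, and since $i\epsilon<1/b$ with $(b-1)/b+1/b=1$ there is again no wraparound and $f(i)\ge 1/b$. As $\epsilon<1/(bn)\le 1/b^2$ gives $b\epsilon<1/b$, the value $f(b)$ is strictly the smallest nonzero entry, so $\pi(1)=b$ no matter how ties among larger entries are resolved (in fact $\epsilon$ can be taken irrational so that no ties occur at all).

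For $\pi(n)=d$ I would run the mirror computation at $\alpha=c/d-\epsilon$ with $0<\epsilon<1/(dn)\le 1/(bd)$. Here $f(d)=(c-d\epsilon)\bmod 1=1-d\epsilon$; for a positive multiple $i=kd\le n$ one gets $f(kd)=1-kd\epsilon\le 1-d\epsilon$, with equality only at $k=1$; and for $i$ not a multiple of $d$, $\gcd(c,d)=1$ forces the fractional part of $ci/d$ to lie in $[1/d,(d-1)/d]$, so subtracting $i\epsilon<1/d$ wraps neither below $0$ nor past $1$, giving $f(i)\le (d-1)/d=1-1/d<1-d\epsilon=f(d)$, where the final inequality again uses $\epsilon<1/d^2$. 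Hence $f(d)$ is strictly the largest entry and $\pi(n)=d$. Alternatively one could deduce $\pi(n)=d$ from $\pi(1)=b$ using the substitution $\alpha\mapsto 1-\alpha$, which reverses the order of the $f$-values and sends $(a/b,c/d)$ to a Farey interval whose left-hand denominator is $d$; but the care needed around the fixed value $f(0)=0$ makes the direct computation cleaner.

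I expect the only genuine obstacle to be the bookkeeping around wraparound: one must be sure that $\epsilon$ is chosen small enough that every relevant sum or difference stays in $[0,1)$, so that the naive formulas for the $f(i)$ are literally correct, and that $\alpha$ remains inside the Farey interval. Each such requirement reduces to $b,d\le n$ together with $\epsilon<1/(bn)$ (respectively $\epsilon<1/(dn)$), so this part is routine; the only structural inputs are $\gcd(a,b)=\gcd(c,d)=1$ and Proposition~\ref{prp:interval}.
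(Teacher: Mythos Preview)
Your proposal is correct and follows essentially the same approach as the paper: pick $\alpha$ just above $a/b$ (respectively just below $c/d$), use Proposition~\ref{prp:interval} to transfer to the whole interval, and do the case split on whether $i$ is a multiple of $b$ (respectively $d$). If anything, you are slightly more careful than the paper about the wraparound bookkeeping when passing from $ai/b+i\epsilon$ to its fractional part.
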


Now, according to part \ref{prp:fareyfacts3} of Proposition \ref{prp:fareyfacts}, $b$ and $d$ can be adjacent denominators in at most one Farey interval with $a/b<c/d$. Thus, we know that two different Farey intervals cannot correspond to the same permutation. Taking Proposition \ref{prp:interval} and Proposition \ref{prp:denoms} together, we have proved Sur\'anyi's bijection between Farey intervals and S\'os permutations with $\beta=0$.

\begin{thm}[Sur\'anyi \cite{Suranyi}]\label{thm:suranyi}
There is a bijection between Farey intervals and S\'os permutations with $\beta=0$.
\end{thm}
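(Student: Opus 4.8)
The plan is to construct the bijection by hand, assembling the results already established in the excerpt. Define a map $\Phi$ from the set of Farey intervals of $F^{(n)}$ to the set of S\'os permutations with $\beta=0$ by sending a Farey interval $(a/b,c/d)$ to the permutation $\pi^{(n)}_{\alpha}$ for any $\alpha$ with $a/b<\alpha<c/d$. Proposition \ref{prp:interval} says this value does not depend on the choice of $\alpha$, so $\Phi$ is well defined, and every permutation in its image is by construction a S\'os permutation with $\beta=0$.

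Injectivity is immediate from what precedes. Suppose $\Phi((a/b,c/d))=\Phi((a'/b',c'/d'))=\pi$. By Proposition \ref{prp:denoms}, the permutation $\pi$ determines $b=\pi(1)$, the denominator of its left endpoint, and $d=\pi(n)$, the denominator of its right endpoint; hence $b=b'$ and $d=d'$. By part \ref{prp:fareyfacts3} of Proposition \ref{prp:fareyfacts}, a pair of denominators belongs to at most one pair of consecutive terms $a/b<c/d$ of $F^{(n)}$, so $(a/b,c/d)=(a'/b',c'/d')$. For surjectivity one must check that every permutation $\pi^{(n)}_{\alpha}$ with $\alpha\in[0,1)$ lies in the image of $\Phi$. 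Since $F^{(n)}$ contains $0/1$ and $1/1$, any $\alpha$ that is not itself one of the fractions of $F^{(n)}$ lies strictly between two consecutive terms, and then $\pi^{(n)}_{\alpha}=\Phi(J)$ for the Farey interval $J$ containing it. The one case that requires an argument is $\alpha=a/b$ a term of $F^{(n)}$, necessarily with $a/b<1$; here I would verify that the lexicographic tie-breaking forces $\pi^{(n)}_{a/b}$ to coincide with $\pi^{(n)}_{a/b+\epsilon}$ for all sufficiently small $\epsilon>0$. Indeed, at $\alpha=a/b$ the values $f(0),f(b),f(2b),\ldots$ all equal $0$ and the lexicographic rule lists these indices in increasing order, exactly as does the slope $a/b+\epsilon$, for which $f(kb)=kb\epsilon$ increases in $k$; and for an index $i$ not divisible by $b$, passing from $a/b$ to $a/b+\epsilon$ preserves both the relative order and the coincidences among the values $f(i)$. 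Hence $\pi^{(n)}_{a/b}=\Phi((a/b,c/d))$ with $c/d$ the successor of $a/b$ in $F^{(n)}$, and $\Phi$ is onto.

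Taken together these three steps prove that $\Phi$ is a bijection. I expect the only real obstacle to be the endpoint analysis in the surjectivity step --- confirming that a slope sitting exactly on a Farey fraction yields the same S\'os permutation as the interval just to its right --- while well-definedness and injectivity are essentially a repackaging of Proposition \ref{prp:interval}, Proposition \ref{prp:denoms}, and part \ref{prp:fareyfacts3} of Proposition \ref{prp:fareyfacts}.
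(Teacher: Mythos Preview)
Your argument tracks the paper's proof exactly for well-definedness and injectivity: Proposition~\ref{prp:interval} gives a well-defined map from Farey intervals to S\'os permutations, and Proposition~\ref{prp:denoms} together with part~\ref{prp:fareyfacts3} of Proposition~\ref{prp:fareyfacts} shows that distinct intervals yield distinct permutations. The paper stops there and declares the bijection proved, leaving surjectivity implicit.

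You are more careful in treating the endpoint case $\alpha=a/b$, which the paper does not address. However, your argument there contains a slip: you claim that for indices $i$ not divisible by $b$, passing from $a/b$ to $a/b+\epsilon$ ``preserves both the relative order and the coincidences among the values $f(i)$.'' This is false---if $i<j$ are both non-multiples of $b$ with $j-i$ a multiple of $b$, then $f(i)=f(j)$ at $\alpha=a/b$ but $f(i)<f(j)$ at $\alpha=a/b+\epsilon$, so the coincidence is broken. Fortunately the conclusion survives: every tie at $\alpha=a/b$ is between indices congruent modulo $b$, and for any such pair $i<j$ one has $f_{a/b+\epsilon}(j)-f_{a/b+\epsilon}(i)=(j-i)\epsilon>0$, so the tie is broken in favor of the smaller index, exactly as the lexicographic rule dictates. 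With that correction your surjectivity argument goes through, and your proof is in fact slightly more complete than the paper's.
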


Also from Sur\'anyi's bijection, we obtain as a corollary that $\pi_{\alpha,0}^{(n)}$ uniquely determines $\pi_{\alpha,0}^{(n+1)}$ if $n+1<b+d$. Otherwise, if $n+1=b+d$, then $\pi_{\alpha,0}^{(n)}$ determines two permutations, depending on whether $\alpha$ is to the left or right of the mediant. We have $\pi(1) = b$ and $\pi(n+1) = b+d$ if $\alpha \in (a/b,(a+b)/(c+d))$, and $\pi(1) = b+d$ and $\pi(n+1)=d$ if $\alpha \in ((a+b)/(c+d), c/d)$.

\subsection*{S\'os's recurrence and proof of the three gaps theorem}

We now present S\'os's theorem about the structure of S\'os permutations, which makes explicit the bijection that was implicit in Theorem \ref{thm:suranyi}.

\begin{thm}[{S\'os \cite[Theorem 1]{Sos}}]\label{thm:sos}
 Suppose $\pi \in \Sos_n$ is a S\'os permutation with $\pi(0)=0$, corresponding to the Farey interval $(a/b,c/d)$. Then, for $0\leq k\leq n-1$,
 \begin{equation}\label{eq:Sos}
  \pi(k+1) = \pi(k)+\begin{cases} b & \mbox{if } \pi(k)\leq n-b,\\
     b-d & \mbox{if } n-b < \pi(k) < d, \\
     -d & \mbox{if } d\leq \pi(k).
     \end{cases}
 \end{equation}
\end{thm}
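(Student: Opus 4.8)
The plan is to prove that the self-map $g$ of $\{0,1,\dots,n\}$ defined by the right-hand side of \eqref{eq:Sos} — where we additionally apply the ``$-d$'' clause when $\pi(k)=d$, so that $g(d)=0$ — is exactly the cyclic successor map for the list $f(0),f(1),\dots,f(n)$; the recurrence then follows at once. (Since $\pi(n)=d$ by Proposition~\ref{prp:denoms}, this extra value of $g$ is never invoked in \eqref{eq:Sos} itself; it is only there to make $g$ a bijection of the full index set.) By Proposition~\ref{prp:interval}, $\pi=\pi^{(n)}_\alpha$ depends only on the Farey interval $(a/b,c/d)$, so we may take $\alpha$ irrational; then $f(0),\dots,f(n)$ are pairwise distinct, $\pi$ is their unique sorting permutation, and $\pi(0)=0$ because $f(0)=0$ is least. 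Put $\epsilon=\alpha-a/b>0$ and $\delta=c/d-\alpha>0$, so $\epsilon+\delta=1/bd$ by part~\ref{prp:fareyfacts2} of Proposition~\ref{prp:fareyfacts}.

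First I would collect two routine facts. Using $b,d\le n$ and $b+d>n$ (part~\ref{prp:fareyfacts4} of Proposition~\ref{prp:fareyfacts}), one checks that the three clauses of $g$ have domains $\{0,\dots,n-b\}$, $\{n-b+1,\dots,d-1\}$, $\{d,\dots,n\}$ partitioning $\{0,\dots,n\}$, that each clause outputs an index in range, and that the clauses are injective with pairwise disjoint images $\{b,\dots,n\}$, $\{n-d+1,\dots,b-1\}$, $\{0,\dots,n-d\}$; hence $g$ is a bijection, and it is clearly fixed-point-free (using $b\neq d$ when $n>1$, by part~\ref{prp:fareyfacts1}). Second, since $f(\ell)=\{\ell\alpha\}$ and $b\alpha=a+b\epsilon$, $-d\alpha=-c+d\delta$, $(b-d)\alpha=(a-c)+(b\epsilon+d\delta)$, the representative $r_m\in[0,1)$ of $f(g(m))-f(m)$ modulo $1$ equals $b\epsilon$, $d\delta$, or $b\epsilon+d\delta$ according as $m$ lies in the first, third, or second clause. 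Crucially each of these three numbers is itself in $(0,1)$: indeed $b\epsilon<1/d\le1$, $d\delta<1/b\le1$, and $b\epsilon+d\delta<\max(b,d)(\epsilon+\delta)=1/\min(b,d)\le1$.

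The heart of the argument is a counting identity. Let $W$ be the number of $m$ with $f(g(m))<f(m)$. Because $g$ is a bijection, $\sum_m f(g(m))=\sum_m f(m)$, while $r_m$ exceeds $f(g(m))-f(m)$ by $1$ exactly when $f(g(m))<f(m)$ and equals it otherwise; hence $\sum_m r_m=W$. Grouping by clause,
\[
 \sum_m r_m=(n-b+1)\,b\epsilon+(b+d-n-1)(b\epsilon+d\delta)+(n-d+1)\,d\delta=d\,b\epsilon+b\,d\delta=bd(\epsilon+\delta)=1,
\]
so $W=1$. Since $g$ is a fixed-point-free permutation and $f$ is injective, along each cycle $f$ attains a maximum and the step leaving it decreases $f$, contributing at least $1$ to $W$; as $W=1$, $g$ is a single $(n+1)$-cycle with exactly one decreasing step. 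Finally $g(0)=b$ (first clause, as $0\le n-b$) and $f(b)=b\epsilon>f(0)=0$, so along the orbit $0,g(0),\dots,g^n(0)$ the value $f$ increases initially; since $f(0)=0$ is the global minimum, the unique decreasing step must be the closing one $g^n(0)\mapsto0$, forcing $0=f(0)<f(g(0))<\cdots<f(g^n(0))$. Therefore $g^k(0)=\pi(k)$ for all $k$, which is \eqref{eq:Sos}.

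The main obstacle should be the bookkeeping in the middle two steps: verifying that the domains and images of the three clauses partition $\{0,\dots,n\}$ exactly, and — more importantly — that each of $b\epsilon$, $d\delta$, $b\epsilon+d\delta$ genuinely lies in $(0,1)$, since that is what makes ``$\bmod 1$'' vacuous on the increments and lets $\sum_m r_m$ collapse to $bd(\epsilon+\delta)=1$. The concluding cycle argument is brief but delicate: one must use injectivity of $f$ and minimality of $f(0)$ to turn ``exactly one decreasing step in an $(n+1)$-cycle'' into the full cyclic $f$-order.
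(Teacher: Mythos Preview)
Your proof is correct, and it takes a genuinely different route from the paper's. The paper specializes to the mediant $\alpha=(a+c)/(b+d)$, which turns $f$ into integer arithmetic modulo $b+d$; a pigeonhole argument then shows the sorted gaps are all $1/(b+d)$ or $2/(b+d)$, and a short case analysis matches each gap size to one of the three increments $+b,-d,b-d$. By contrast, you work with an arbitrary irrational $\alpha$ in the interval, define the candidate successor map $g$ directly from the right-hand side of \eqref{eq:Sos}, and prove it is the cyclic successor via the averaging identity $\sum_m r_m = bd(\epsilon+\delta)=1$, which forces exactly one $f$-decreasing step and hence a single $(n{+}1)$-cycle ordered by $f$. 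The paper's argument is more concrete and immediately yields the explicit gap sizes used for the Three Gaps Theorem; yours is more structural, avoids the special choice of $\alpha$ and the pigeonhole step, and would adapt to situations where no convenient mediant is available. One small point: your inequality $b\epsilon+d\delta<\max(b,d)(\epsilon+\delta)$ is really $\le$, with equality only when $b=d$; since that happens only for $n=1$, where the middle clause is empty, the argument is unaffected, but you might phrase it that way.
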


\begin{proof}
By Theorem \ref{thm:suranyi}, we can choose any $\alpha$ in the interval $(a/b,c/d)$ to define $\pi$. A nice choice is to let $\alpha = (a+c)/(b+d)$ be the mediant of the interval and examine $\pi = \pi_{\alpha,0}^{(n)}$ in this case.

Here we have $f(x) = \alpha x \mod 1$ expressed very simply in terms of modular arithmetic:
\[
 f(i) = \frac{ i\cdot(a+c) \pmod{b+d}}{b+d}.
\]
For example, with $(a/b,c/d)=(2/5,3/7)$ and $n=9$, we take $\alpha = 5/12$ and find that
\[
(f(0),f(1),\ldots,f(9))=\frac{1}{12}(0,5,10,3,8,1,6,11,4,9).
\]

It follows from part \ref{prp:fareyfacts2} of Proposition \ref{prp:fareyfacts} that $b(a+c) \equiv 1 \pmod{b+d}$ and $d(a+c)\equiv -1 \pmod{b+d}$.  Thus $f(b) = 1/(b+d)$ and $f(d) = 1-1/(b+d)$, or $f(-d) = 1/(b+d)$.  Since we can get no closer than 1 modulo $b+d$, we know right away that $\pi(1) = b$ and $\pi(n)=d$. Sorting the example above gives
{\small
\[
 (f(0),f(5),f(3),f(8),f(1),f(6),f(4),f(9),f(2),f(7))=\frac{1}{12}(0,1,3,4,5,6,8,9,10,11).
\]
}

Since $(a+c)$ is relatively prime to $(b+d)$, all the entries in
\[
\mathbf{f}=(f(0),f(\pi(1)), \ldots, f(\pi(n)))
\]
must be distinct. Moreover, since $b+d > n \geq \max\{b,d\} \geq (b+d)/2$, the pigeonhole principle implies that the gaps between consecutive entries, $f(\pi(k+1))-f(\pi(k))$, must be either $1/(b+d)$ or $2/(b+d)$.

First, suppose
\[
 f(\pi(k+1))-f(\pi(k)) = \frac{1}{b+d}.
\]
If $\pi(k+1)>\pi(k)$, then
\[
f\left(\pi(k+1)-\pi(k)\right)=f(\pi(k+1))-f(\pi(k))=f(b).
\]
Since all the entries in $\mathbf{f}$ are distinct, we have $\pi(k+1)-\pi(k) = b$, or $\pi(k+1)=\pi(k)+b$. Of course, this identity can only be true if $\pi(k) \leq n-b$, explaining the first case in equation (\ref{eq:Sos}).

The third case is similar. Suppose $f(\pi(k+1))-f(\pi(k)) = 1/(b+d)$ and now $\pi(k)>\pi(k+1)$. Then $f\left(\pi(k+1)-\pi(k)\right)=f(\pi(k+1))-f(\pi(k))=f(-d)$, and so $\pi(k+1) = \pi(k)-d$, which only makes sense if $\pi(k)\geq d$.

As the middle case of $n-b<\pi(k)<d$ (which is empty if $b+d=n+1$) cannot correspond to a gap of size $1/(b+d)$, it must come from a gap of size $2/(b+d)$. Note that
\[
 \frac{2}{b+d} = f(b)+f(-d) = f(b-d).
\]
Thus, if $f(\pi(k+1)-\pi(k)) = f(\pi(k+1))-f(\pi(k)) =  2/(b+d)$, we have $\pi(k+1)-\pi(k) = b-d$, and therefore $\pi(k+1)=\pi(k)+b-d$.
\end{proof}

From S\'os's recurrence for the entries of $\pi$, we can see precisely how to construct the permutation corresponding to a Farey interval $(a/b,c/d)$. For example, let $\pi$ be the S\'os permutation corresponding to the interval $(3/7,1/2)$ and $n=7$. We have $b=7, d=2$, and $n-b=0$. Thus:
\begin{align*}
 \pi(0) &=0,\\
 \pi(1) &=0+7 =7,\\
 \pi(2) &=7-2 = 5,\\
 \pi(3) &=5-2 = 3,\\
 \pi(4) &=3-2 =1,\\
 \pi(5) &=1+(7-2) = 6,\\
 \pi(6) &=6-2 = 4,\\
 \pi(7) &=4-2 = 2,
\end{align*}
so $\pi = 07531642$.

Moreover, the three gaps theorem (Theorem \ref{thm:threegaps}) now follows. If $\delta_0 = f(\pi(1))-f(\pi(0))=f(b)$ and $\delta_n = 1-f(\pi(n))=1-f(d)=f(-d)$, then Theorem \ref{thm:sos} implies
\[
 \delta_k = f(\pi(k+1))-f(\pi(k)) = \begin{cases}
  \delta_0 & \mbox{if } \pi(k) \leq n-b,\\
  \delta_0+\delta_n & \mbox{if } n-b < \pi(k) < d,\\
  \delta_n & \mbox{if } d\leq \pi(k)
 \end{cases}
\]
for all intergers $0\leq k < n$.

The second case above, with a gap of size $\delta_0+\delta_n$, occurs for some $k$ if and only if $n+1 < b+d$. Thus we can make the following observation.

\begin{obs}\label{obs:distinct}
Suppose $\alpha$ lies in the Farey interval $(a/b,c/d)$ with $f(x) = \alpha x \mod 1$. If $n+1<b+d$, then the set of gaps $\{\delta_k = f(\pi(k+1))-f(\pi(k)) : k=0,1,\ldots,n\}$ has three elements, while if $b+d=n+1$, the set has only two elements: $\delta_0$ and $\delta_n$. Moreover, if $\alpha = a/b$ with $b\leq n$, then $\delta_k \in \{ 0, 1/b\}$.
\end{obs}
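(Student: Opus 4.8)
The plan is to read the claim off the refined form of the three gaps theorem established in the discussion just above, together with Proposition \ref{prp:denoms}, which gives $\pi(1)=b$ and $\pi(n)=d$. Recall from that discussion that, for any $\alpha$ in the interior of the Farey interval $(a/b,c/d)$, writing $\delta_0=f(b)=f(\pi(1))$ and $\delta_n=1-f(d)=1-f(\pi(n))$, one has
\[
 \delta_k = f(\pi(k+1))-f(\pi(k)) = \begin{cases} \delta_0 & \text{if } \pi(k)\le n-b,\\ \delta_0+\delta_n & \text{if } n-b<\pi(k)<d,\\ \delta_n & \text{if } d\le\pi(k), \end{cases}
\]
for $0\le k\le n-1$, with $\delta_n=1-f(\pi(n))$. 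The statement then reduces to deciding which of the values $\delta_0$, $\delta_n$, $\delta_0+\delta_n$ is actually attained, checking that the attained values are pairwise distinct, and dealing with the rational boundary case $\alpha=a/b$ separately.

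First I would identify which of the three cases is non-vacuous. Since $\pi$ is a bijection of $\{0,1,\ldots,n\}$, the first case is always realized — take $k=0$, where $\pi(0)=0\le n-b$ because $b\le n$ — and $\delta_n$ is in the gap set by the definition $\delta_n=1-f(\pi(n))$. The middle value $\delta_0+\delta_n$ is attained exactly when some $\pi(k)$ with $k\le n-1$ lies strictly between $n-b$ and $d$; since the only value taken at index $n$ is $\pi(n)=d$, which is not in the open interval $(n-b,d)$, this happens if and only if the set of integers strictly between $n-b$ and $d$ is nonempty, i.e. $n-b+1\le d-1$, i.e. $n+1<b+d$. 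So the gap set is $\{\delta_0,\delta_n\}$ when $b+d=n+1$ and $\{\delta_0,\delta_n,\delta_0+\delta_n\}$ when $n+1<b+d$.

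Next I would check the cardinalities. Because $a/b$ and $c/d$ are consecutive in $F^{(n)}$ and $\alpha$ lies strictly between them, $\alpha\notin F^{(n)}$, so $(i-j)\alpha\notin\Z$ whenever $1\le|i-j|\le n$, and hence $f(0),\ldots,f(n)$ are pairwise distinct; in particular $\delta_0=f(b)>0$ and $\delta_n=1-f(d)>0$, so $\delta_0+\delta_n$ differs from each of $\delta_0$ and $\delta_n$. The only genuinely delicate point is $\delta_0\ne\delta_n$: the equality $f(b)=1-f(d)$ forces $(b+d)\alpha\in\Z$, i.e. $\alpha$ is a multiple of $1/(b+d)$ in the interval $(a/b,c/d)$, which forces $\alpha=(a+c)/(b+d)$, the mediant; so the distinctness — and with it the exact counts of three and two — holds for every $\alpha$ in the interval other than the mediant. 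I expect this, together with the need to realize the middle case at an index strictly below $n$ (which is exactly why Proposition \ref{prp:denoms} is invoked), to be the only places requiring real care; there is no deep obstacle here.

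Finally, for the ``moreover'' clause I would argue directly: when $\alpha=a/b$ is in lowest terms with $b\le n$, we have $f(x)=(ax\bmod b)/b$, so every value $f(0),\ldots,f(n)$ is a multiple of $1/b$; and since $\gcd(a,b)=1$ and $\{0,1,\ldots,b-1\}\subseteq\{0,1,\ldots,n\}$, each of $0,1/b,\ldots,(b-1)/b$ actually occurs among them. Hence in the sorted list $f(\pi(0))\le\cdots\le f(\pi(n))$ any two consecutive entries are either equal (gap $0$) or differ by exactly $1/b$, while $\delta_n=1-(b-1)/b=1/b$; therefore $\delta_k\in\{0,1/b\}$.
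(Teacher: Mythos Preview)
Your argument is correct and follows the same route as the paper, which simply records this observation as an immediate consequence of the three-case formula for $\delta_k$ derived from Theorem~\ref{thm:sos}. Your treatment is in fact more careful than the paper's: you supply the distinctness arguments the paper omits, and you correctly flag that at the mediant $\alpha=(a+c)/(b+d)$ one has $\delta_0=\delta_n$, so the literal claim of ``three elements'' fails at that single point of the interval.
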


\section{New results for S\'os permutations}

In this section we consider S\'os permutations for any $\beta$, and provide some new results, including the enumeration of S\'os permutations, a characterization of the domain of a S\'os permutation, and a ``three areas'' theorem that echoes the three gaps theorem.

\subsection*{Enumeration of S\'os permutations}

Our first main result is an enumeration of S\'os permutations. This follows by combining Sur\'anyi's bijection in the $\beta=0$ case with the cyclic permutation coming from vertical shifts by larger $\beta$.

To be more precise, define, for any fixed $n$ and $\alpha$,
\[
 \Sos_{n,\alpha} = \{ \pi_{\alpha,\beta}^{(n)} : 0\leq \beta < 1 \}.
\]
Let $c$ be the cycle defined by $c(i)=i-1 \pmod{n+1}$, so that $\pi\circ c(i) = \pi(i-1)$ for $i > 0$ and $\pi\circ c(0) =\pi(n)$. For example, $203154\circ c = 420315$.

\begin{lem}\label{lem:cycle}
Let $\pi \in \Sos_{n,\alpha}$. Then
\[
 \Sos_{n,\alpha} = \{ \pi\circ c^k : k = 0,1,\ldots,n\}.
\]
In particular, $|\Sos_{n,\alpha}| = n+1$.
\end{lem}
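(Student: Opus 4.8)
The plan is to understand exactly how the sorted list $\mathbf{f} = (f(0), f(1), \ldots, f(n))$ changes as $\beta$ increases from $0$ to $1$, for a fixed $\alpha$. For a generic $\alpha$ (irrational, say, or at least with no two of the values $f_{\alpha,\beta}(i)$ ever coinciding over a range of $\beta$'s), each value $f_{\alpha,\beta}(i) = \alpha i + \beta \bmod 1$ moves to the right at unit speed as $\beta$ grows, until it hits $1$, at which point it wraps around to $0$. The relative cyclic order of the points $f(0), \ldots, f(n)$ on the circle $\R/\Z$ is therefore constant; what changes is only which point is "first" (closest to $0$ from above), i.e., where the cut point $0$ sits in this cyclic order. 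Each time some $f(i)$ crosses $0$ — equivalently, each time $\beta$ passes a value of the form $-\alpha i \bmod 1$ — the index that was $\pi(n)$ (the largest) becomes $\pi(0)$ (the new smallest), and every other index shifts up one position in the ranking. That is precisely the operation $\pi \mapsto \pi \circ c$: if $\sigma$ is the old sorting permutation, the new one $\sigma'$ satisfies $\sigma'(0) = \sigma(n)$ and $\sigma'(i) = \sigma(i-1)$ for $i \geq 1$, so $\sigma' = \sigma \circ c$.

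First I would fix a representative $\pi = \pi_{\alpha,\beta_0}^{(n)} \in \Sos_{n,\alpha}$ and show the inclusion $\{\pi \circ c^k : 0 \le k \le n\} \subseteq \Sos_{n,\alpha}$: starting from $\beta_0$ and increasing $\beta$, the first crossing event replaces $\pi$ by $\pi \circ c$ (up to the tie-breaking convention, which I address below), the next by $\pi \circ c^2$, and so on; since over a full period $\beta \in [0,1)$ each of the $n+1$ values $f(0), \ldots, f(n)$ crosses $0$ exactly once, after $n+1$ crossings we return to $\pi \circ c^{n+1} = \pi$, having realized each of $\pi, \pi\circ c, \ldots, \pi \circ c^n$ for some $\beta \in [0,1)$. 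For the reverse inclusion $\Sos_{n,\alpha} \subseteq \{\pi \circ c^k\}$: any $\pi_{\alpha,\beta}^{(n)}$ is obtained after some number of crossings from $\beta_0$, hence equals $\pi \circ c^k$ for the appropriate $k \bmod (n+1)$. Finally, $\pi \circ c^k$ for $k = 0, 1, \ldots, n$ are pairwise distinct because $c$ has order $n+1$ in the symmetric group on $\{0, \ldots, n\}$ and $\pi$ is a bijection, so right-composition with distinct powers of $c$ gives distinct permutations; this yields $|\Sos_{n,\alpha}| = n+1$.

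The step I expect to be the main obstacle is handling ties and the lexicographic-first convention carefully, so that the crossing operation is genuinely $\pi \mapsto \pi \circ c$ and not some other shift. When $\alpha$ is rational, say $\alpha = a/b$, several of the $f(i)$ coincide (those $i$ in the same residue class mod $b$), and these coincidences persist as $\beta$ varies; moreover crossing events happen simultaneously in blocks. I would argue that within a block of tied values the lexicographic rule imposes a fixed internal order (increasing by index, as already used implicitly in the earlier sections, cf. the discussion preceding Theorem~\ref{thm:threegaps} and Observation~\ref{obs:distinct}), that this block moves rigidly as $\beta$ increases, and that when the block wraps around $0$ the whole block of former-largest indices becomes the block of former-smallest indices while preserving internal order — which is exactly what $c^{(\text{block size})}$ does, and each individual crossing within the block still corresponds to one application of $c$. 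One clean way to avoid case analysis entirely is to first prove the lemma for irrational $\alpha$ by the argument above, and then note that $\Sos_{n,\alpha}$ for rational $\alpha = a/b$ equals $\Sos_{n,\alpha'}$ for $\alpha'$ slightly larger (appealing to Proposition~\ref{prp:interval} and the fact that the set $\Sos_{n,\alpha}$ only depends on the Farey interval, once one checks the tie-breaking at $\beta = 0$ matches the limit from the right) — but I suspect the direct block argument is cleaner to write and more self-contained, so that is the route I would take.
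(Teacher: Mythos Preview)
The paper does not actually prove this lemma; it leaves the details to the reader with a pointer to the circle picture in Figure~\ref{fig:circle}. Your core argument---viewing the $f(i)$ as points on a circle that rotate rigidly as $\beta$ increases, with each crossing of $0$ implementing one application of $c$---is exactly that circle argument, and it is correct whenever the $n+1$ points $f(0),\ldots,f(n)$ are distinct, i.e., whenever $\alpha\notin F^{(n)}$.

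Your handling of the case $\alpha\in F^{(n)}$, however, contains a genuine gap. If $\alpha=a/b\in F^{(n)}$, indices in the same residue class modulo $b$ have identical $f$-values and wrap past $0$ simultaneously. You correctly observe that a simultaneous wrap of a block of size $m$ effects $\pi\mapsto\pi\circ c^{m}$, but this means the intermediate shifts $\pi\circ c,\ldots,\pi\circ c^{m-1}$ are \emph{skipped}, not realized: $\Sos_{n,\alpha}$ is then a proper subset of $\{\pi\circ c^{k}:0\le k\le n\}$. Concretely, for $n=3$ and $\alpha=1/2$ one checks directly that $\Sos_{3,1/2}=\{0213,1302\}$, of cardinality $2$ rather than $4$. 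Your alternative route via Proposition~\ref{prp:interval} fails for the same reason: that proposition concerns open Farey intervals and does not extend to their endpoints, and indeed $\Sos_{3,1/2}\subsetneq\Sos_{3,1/2+\epsilon}$. So the lemma as literally stated is false at Farey endpoints $\alpha\in F^{(n)}$. This does not damage the paper's applications (Theorem~\ref{thm:enumerate} and beyond), which only invoke the lemma for $\alpha$ in the interior of a Farey interval, but you should restrict the statement accordingly rather than attempt to patch the boundary case.
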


We leave the details of the proof of Lemma \ref{lem:cycle} to the reader, using the illustration of points on a circle in Figure \ref{fig:circle} as a guide.

By Lemma \ref{lem:cycle}, there is precisely one permutation in $\Sos_{n,\alpha}$ with $\pi(0)=0$ and thanks to Sur\'anyi's bijection, the sets $\Sos_{n,\alpha}$ and $\Sos_{n,\alpha'}$ coincide if $\alpha, \alpha'$ lie in the same Farey interval, and are disjoint otherwise. The number of such intervals is given by equation \eqref{eq:fareycount}, and multiplying by $(n+1)$ we get the following counting formula.

\begin{thm}\label{thm:enumerate}
The number of S\'os permutations of $\{0,1,\ldots,n\}$ is
\[
 |\Sos_n| = (n+1)\sum_{k=1}^n \varphi(k),
\]
which is asymptotically $3n^3/\pi^2$.
\end{thm}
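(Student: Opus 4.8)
The plan is to decompose $\Sos_n$ into fibers over the slope parameter and to count those fibers using the machinery already in place. Write
$\Sos_n = \bigcup_{\alpha\in[0,1)}\Sos_{n,\alpha}$,
which is immediate from the definitions. By Lemma~\ref{lem:cycle}, each fiber $\Sos_{n,\alpha}$ is a single orbit of the cyclic group $\langle c\rangle$ of order $n+1$ acting on permutations of $\{0,1,\ldots,n\}$ by right composition, and this orbit has exactly $n+1$ elements. So the whole computation reduces to counting how many \emph{distinct} fibers occur, and the target is to show this number is the number of Farey intervals, which is $\sum_{k=1}^n\varphi(k)$ by \eqref{eq:fareycount}.

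First I would pin down a canonical representative in each fiber. Distinct orbits of a group action are disjoint, so any two fibers $\Sos_{n,\alpha}$ and $\Sos_{n,\alpha'}$ are either equal or disjoint, whence $|\Sos_n|$ is $(n+1)$ times the number of distinct fibers. Inside a fiber $\{\pi\circ c^k : 0\le k\le n\}$, we have $(\pi\circ c^k)(0)=\pi\bigl(-k\bmod(n+1)\bigr)$, and as $k$ ranges over $\{0,\ldots,n\}$ this runs over all of $\{0,1,\ldots,n\}$; hence exactly one element of each fiber sends $0$ to $0$. That element is $\pi_{\alpha,0}^{(n)}$: taking $\beta=0$ makes $f(0)=0$ a minimum of $(f(0),\ldots,f(n))$, and the lexicographic tie-breaking then forces $\pi(0)=0$. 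So sending a fiber to its distinguished representative gives a bijection from the set of distinct fibers onto $\{\pi_{\alpha,0}^{(n)} : \alpha\in[0,1)\}$, the set of S\'os permutations with $\beta=0$.

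Then I would invoke Sur\'anyi's bijection. By Theorem~\ref{thm:suranyi}, together with Proposition~\ref{prp:interval} (which ensures $\pi_{\alpha,0}^{(n)}$ depends only on the Farey interval containing $\alpha$), the set of S\'os permutations with $\beta=0$ is in bijection with the set of Farey intervals, of which there are $\sum_{k=1}^n\varphi(k)$ by \eqref{eq:fareycount}. Composing the two bijections, the number of distinct fibers is $\sum_{k=1}^n\varphi(k)$, so $|\Sos_n| = (n+1)\sum_{k=1}^n\varphi(k)$. For the asymptotics, the remark following \eqref{eq:fareycount} gives $\sum_{k=1}^n\varphi(k)\sim 3n^2/\pi^2$, hence $|\Sos_n|\sim (n+1)\cdot 3n^2/\pi^2 \sim 3n^3/\pi^2$.

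The main thing to be careful about is the disjoint-or-equal dichotomy for the fibers and the identification of the distinguished representative; both rest entirely on Lemma~\ref{lem:cycle} exhibiting $\Sos_{n,\alpha}$ as a full $\langle c\rangle$-orbit of the maximal possible size $n+1$, so that no S\'os permutation gets counted under two slopes unless those slopes share a fiber (equivalently, by Sur\'anyi, a Farey interval). A secondary point deserving a sentence is the behavior at rational $\alpha=a/b$ with $b\le n$, which is not interior to any Farey interval: the lexicographic-first convention makes $\pi_{a/b,0}^{(n)}$ coincide with $\pi_{\alpha,0}^{(n)}$ for $\alpha$ just to the right of $a/b$, so such $\alpha$ contribute nothing new, consistently with Theorem~\ref{thm:suranyi}.
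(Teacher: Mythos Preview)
Your proof is correct and follows essentially the same approach as the paper: use Lemma~\ref{lem:cycle} to see each $\Sos_{n,\alpha}$ is a full $(n+1)$-cycle orbit with a unique representative fixing $0$, then apply Sur\'anyi's bijection (Theorem~\ref{thm:suranyi}) and \eqref{eq:fareycount} to count the distinct fibers. Your version is slightly more explicit about the orbit disjointness and the boundary behavior at rational $\alpha$, but the argument is the same.
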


\subsection*{Domains of S\'os permutations}

Our next result provides a two-dimensional analogue of Sur\'anyi's bijection. That is, for any $n$, we provide a partition of the $(\alpha,\beta)$-parameter space into polygonal regions that are in bijection with the elements of $\Sos_n$.

To make this idea a bit more clear, define, for any permutation $\pi$ of $\{0,1,\ldots,n\}$, the \emph{domain} of $\pi$, denoted $S(\pi)$, to be the set of all points in $[0,1)\times [0,1)$ that give rise to $\pi$, i.e.,
\[
S(\pi) = \{ (\alpha,\beta) \in [0,1)^2 : \pi_{\alpha,\beta}^{(n)} = \pi\}.
\]
(The set $S(\pi)$ is nonempty if and only if $\pi$ is a S\'os permutation.) See Figures \ref{fig:F3regions} and \ref{fig:F4regions} for the $n=2$ and $n=3$ examples.

\begin{figure}
\[
\begin{tikzpicture}[scale=6]
\draw[fill=white!50!black, draw=none, opacity=.5] (0,0)--(1,0)--(1,1)--(0,1)--(0,0);
\draw[very thick, ->] (0,-.1)--(0,1.1) node[left] {$\beta$};
\draw[very thick, ->] (-.1,0)--(1.1,0) node[below] {$\alpha$};
\draw (0.5,1)--(0.5,0) node[below] {$\frac{1}{2}$};
\draw (0,1) node[left] {$1$} --(1,0) node[below] {$1$};;
\draw (0,1)--(.5,0);
\draw (.5,1)--(1,0);
\draw (.2,.2) node[fill=none,draw=none] {$012$};
\draw (.4,.45) node[fill=none,draw=none] {$201$};
\draw (.35,.8) node[fill=none,draw=none] {$120$};
\draw (.8,.8) node[fill=none,draw=none] {$210$};
\draw (.6,.55) node[fill=none,draw=none] {$102$};
\draw (.65,.2) node[fill=none,draw=none] {$021$};
\end{tikzpicture}
\]
\caption{The 6 regions corresponding to the S\'os permutations with $n=2$.}\label{fig:F3regions}
\end{figure}
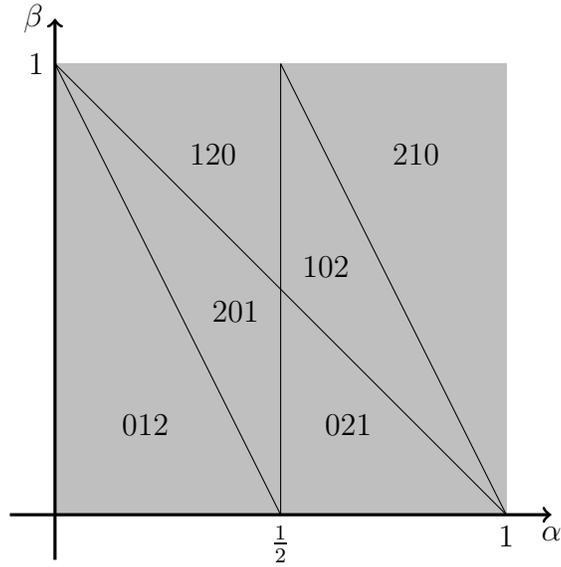

\begin{figure}
\[
\begin{tikzpicture}[yscale=8, xscale=8]
\draw[fill=white!50!black, draw=none, opacity=.5] (0,0)--(1,0)--(1,1)--(0,1)--(0,0);
\draw[very thick, ->] (0,-.1)--(0,1.1) node[left] {$\beta$};
\draw[very thick, ->] (-.1,0)--(1.1,0) node[below] {$\alpha$};
\draw (0.3333,1)--(0.3333,0) node[below] {$\frac{1}{3}$};
\draw (0.5,1)--(0.5,0) node[below] {$\frac{1}{2}$};
\draw (0.6666,1)--(0.6666,0) node[below] {$\frac{2}{3}$};
\draw[dashed] (0.25,1)--(0.25,0) node[below] {$\frac{1}{4}$};
\draw[dashed] (0.75,1)--(0.75,0) node[below] {$\frac{3}{4}$};
\draw (0,1) node[left] {$1$}--(1,0) node[below] {$1$};
\draw (0,1)--(.5,0);
\draw (0,1)--(.3333,0);
\draw[dashed] (0,1)--(0.25,0);
\draw[dashed] (0.25,1)--(0.5,0);
\draw[dashed] (0.5,1)--(0.75,0);
\draw[dashed] (0.75,1)--(1,0);
\draw (.5,1)--(1,0);
\draw (.6666,1)--(1,0);
\draw (.3333,1)--(.6666,0);
\draw (.15,.15) node[fill=none,draw=none] {$0123$};
\draw (.27,.3) node[fill=none,draw=none,rotate=-70] {$3012$};
\draw (.25,.62) node[fill=none,draw=none,rotate=-60] {$2301$};
\draw (.2,.9) node[fill=none,draw=none] {$1230$};
\draw (.4,.05) node[fill=none,draw=none] {$0312$};
\draw (.42,.42) node[fill=none,draw=none] {$2031$};
\draw (.4,.65) node[fill=none,draw=none, rotate=-45] {$1203$};
\draw (.43,.95) node[fill=none,draw=none] {$3120$};
\draw (.57,.05) node[fill=none,draw=none] {$0213$};
\draw (.6,.35) node[fill=none,draw=none, rotate=-45] {$3021$};
\draw (.58,.58) node[fill=none,draw=none] {$1302$};
\draw (.6,.95) node[fill=none,draw=none] {$2130$};
\draw (.8,.1) node[fill=none,draw=none] {$0321$};
\draw (.75,.38) node[fill=none,draw=none,rotate=-60] {$1032$};
\draw (.73,.7) node[fill=none,draw=none,rotate=-70] {$2103$};
\draw (.85,.85) node[fill=none,draw=none] {$3210$};
\end{tikzpicture}
\]
\caption{The 16 regions corresponding to the S\'os permutations with $n=3$, along with the dashed lines that will give the partition for $n=4$.}\label{fig:F4regions}
\end{figure}
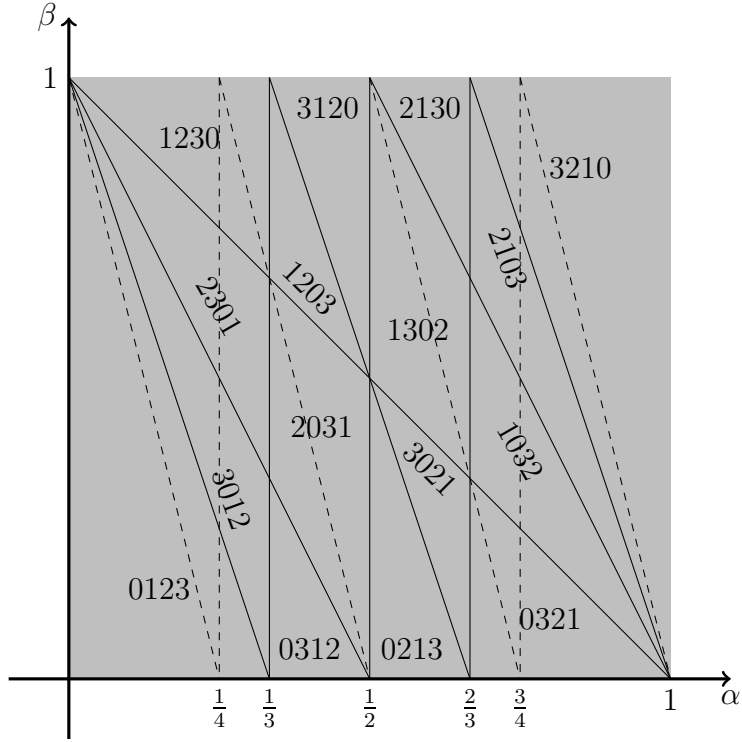

We have the following simple characterization of the regions $S(\pi)$.\footnote{Curiously, these lines are a subset of the ``tune resonance lines'' from accelerator physics, as described in \cite{tomas}.
}

\begin{thm}\label{thm:domains}
For each $\pi \in \Sos_n$, the interior of the domain $S(\pi)$ is a connected component in the complement of the lines
\[
 \{ \alpha=a/b : a/b \in F^{(n)} \} \cup \{ i\alpha+\beta = j : 1\leq j\leq i \leq n \}.
\]
\end{thm}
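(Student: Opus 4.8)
The plan is to mimic Sur\'anyi's argument from Section~2, but now with both parameters $\alpha$ and $\beta$ varying, so that the relevant loci form a line arrangement in the square $[0,1)^2$ rather than a finite set of points on an interval. The core principle is the same: the permutation $\pi_{\alpha,\beta}^{(n)}$ can only change as $(\alpha,\beta)$ moves across a locus where two of the values $f(0),\dots,f(n)$ either become equal or where one of them wraps around past $0$ or $1$. So I would first show that every such locus is contained in the stated union of lines, and then show that $\pi_{\alpha,\beta}^{(n)}$ is genuinely constant on each connected component of the complement.

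First I would analyze, for a fixed pair $0\le i<j\le n$, the function $g(\alpha,\beta)=f_{\alpha,\beta}(j)-f_{\alpha,\beta}(i) = (j-i)\alpha \bmod 1$ --- note the $\beta$ cancels in the difference, so this depends only on $\alpha$. As in Observation~\ref{obs:h}, $g$ is piecewise linear in $\alpha$ with slope $j-i$, it vanishes exactly when $(j-i)\alpha\in\Z$, i.e. $\alpha=a/(j-i)\in F^{(n)}$, and it is discontinuous exactly when $(j-i)\alpha\in\Z$ as well; in all cases the relative order of $f(i)$ and $f(j)$ can only switch at a value $\alpha=a/b\in F^{(n)}$ with $b\le j-i\le n$. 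This accounts for the first family of lines $\{\alpha=a/b : a/b\in F^{(n)}\}$, and it handles every inversion $(i,j)$ with $1\le i<j\le n$. The remaining subtlety is the position of $f(0)=\beta$ relative to the others and, more importantly, ties: we need to know precisely where a value $f(i)=i\alpha+\beta\bmod 1$ crosses an integer, since that is where its cyclic position (``largest'' vs.\ ``smallest'') flips and where lexicographic tie-breaking can change $\pi$. The value $f(i)$ crosses an integer exactly on the lines $i\alpha+\beta=j$ for integers $j$, and for $0\le i\le n$ and $f(i)\in[0,1)$ the relevant ones are $1\le j\le i$ (plus $j=0$, which is the boundary $\beta=0$ when $i=0$, absorbed into the square's boundary; for $i=0$ there is no such line since $\beta\in[0,1)$ never crosses an interior integer). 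This is the second family $\{i\alpha+\beta=j : 1\le j\le i\le n\}$.

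Next I would argue that on any connected component $U$ of the complement of this arrangement, $\pi_{\alpha,\beta}^{(n)}$ is constant. The point is that throughout $U$ all the values $f(0),\dots,f(n)$ are distinct (no two coincide, by the first family together with the observation that $f(i)=f(j)$ forces $(j-i)\alpha\in\Z$) and none of them ever equals $0$ (by the second family, together with the boundary $\beta\ge 0$); hence the sorting permutation is unambiguous and locally constant --- it can only change if some pair of values swaps order or some value wraps past $0/1$, and neither happens inside $U$. Since $U$ is connected and $\pi$ takes values in a finite set while being locally constant, it is constant on $U$. Finally, two different components cannot give the same permutation: by Lemma~\ref{lem:cycle} together with Sur\'anyi's bijection (Theorem~\ref{thm:suranyi}), any S\'os permutation determines its Farey interval $(a/b,c/d)$ (this pins down the $\alpha$-strip it lives in) and determines $\pi(0)$, hence the number of cyclic shifts; and within one Farey strip the lines of the second family, together with the horizontal shifts, separate the square into exactly $n+1$ regions, one for each cyclic shift $\pi\circ c^k$ --- so each region is $S(\pi)$ for a distinct $\pi$. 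Conversely every nonempty $S(\pi)$ with $\pi\in\Sos_n$ must be a union of such components and, by the counting in Theorem~\ref{thm:enumerate} matching the number of components of the arrangement, is a single component.

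The main obstacle I anticipate is the careful bookkeeping around ties and the lexicographic-first convention: I must check that as $(\alpha,\beta)$ crosses one of the lines $i\alpha+\beta=j$ the change in $\pi$ is exactly a cyclic shift (as in the $\beta$-translation discussion in the introduction), so that the $n+1$ regions in each Farey strip are precisely the $n+1$ cyclic shifts and nothing degenerates or gets double-counted; and at the triple points where lines from both families meet I need to confirm the components are still cut out cleanly. A secondary point requiring care is verifying that the bound $j\le i\le n$ on the second family is tight --- that no line $i\alpha+\beta=j$ with $j>i$ or $i>n$ is needed, and that no line with $j<1$ (other than the boundary) is needed --- which follows from $0\le f(i)<1$ but should be stated explicitly. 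The rest is the routine connectedness-and-counting argument sketched above.
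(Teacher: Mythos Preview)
Your overall strategy --- show that the sorting permutation is locally constant off the stated line arrangement, then match the component count against $|\Sos_n|$ --- is a legitimate route and genuinely different from the paper's, which instead proves the explicit bounding-line description of each $S(\pi)$ (Theorem~\ref{thm:pidomain}) and lets Theorem~\ref{thm:domains} fall out as a corollary. Your approach is more conceptual; the paper's is more constructive and yields the extra information (area formula, three areas theorem) for free.

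That said, your first paragraph contains a real error. You write $g(\alpha,\beta)=f_{\alpha,\beta}(j)-f_{\alpha,\beta}(i)=(j-i)\alpha\bmod 1$ and assert ``the $\beta$ cancels.'' It does not: in fact
\[
f_{\alpha,\beta}(j)-f_{\alpha,\beta}(i)=(j-i)\alpha-\bigl(\lfloor j\alpha+\beta\rfloor-\lfloor i\alpha+\beta\rfloor\bigr),
\]
which depends on $\beta$ through the floors and equals either $(j-i)\alpha\bmod 1$ or that quantity minus $1$. Hence your conclusion that ``the relative order of $f(i)$ and $f(j)$ can only switch at a value $\alpha=a/b\in F^{(n)}$'' is false: the order also flips whenever $i\alpha+\beta$ or $j\alpha+\beta$ crosses an integer, i.e., on the diagonal lines. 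So the division of labor you set up --- vertical lines handle all inversions with $i,j\ge 1$, diagonal lines only handle $f(0)$ and tie-breaking --- is wrong. Fortunately your second paragraph does not actually use that formula: the argument ``on $U$ each $f(i)$ is continuous (no wrap) and all are distinct (no $(j-i)\alpha\in\Z$), hence the ordering is constant'' is correct and self-contained. The fix is to delete the faulty computation and lead with that continuity-plus-distinctness argument directly. One further point needing a sentence of justification is the claim that each Farey strip is cut into exactly $n+1$ pieces by the diagonal lines; the cleanest way is the paper's, via Proposition~\ref{prp:toplines}: for fixed $\alpha$ in the strip, the diagonal lines meet the vertical segment $\{\alpha\}\times[0,1)$ at exactly the $n$ heights $\beta=1-f_{\alpha,0}(\pi'(k))$, $k=1,\dots,n$.
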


Moreover, given a S\'os permutation $\pi$ we can determine the boundary of its domain $S(\pi)$ as follows. Suppose $k$ is such that $\pi(k)=0$. Then we let $b$ be the entry to the right of zero and $d$ be the entry to the left of zero.  That is, we let
\[
d = \pi(k-1), \qquad b=\pi(k+1).
\]
These entries are taken cyclically, so that if $k=0$ then $\pi(k-1) = \pi(n)$, and if $k=n$ then $\pi(k+1)=\pi(0)$. By Lemma \ref{lem:cycle} and Sur\'anyi's bijection, we see that these two entries of $\pi$ determine the Farey interval $(a/b,c/d)$ in which $\alpha$ must lie.

Now let
\begin{equation}\label{eq:jtop}
j_{\bt} = \begin{cases}
      0 & \mbox{if }\pi(0)=0 \\
      1+\left\lfloor \frac{a}{b}\pi(0) \right\rfloor & \mbox{if } \pi(0)\neq 0
   \end{cases}
    \quad \mbox{ and } \quad j_{\tp} = 1+\left\lfloor \frac{a}{b}\pi(n) \right\rfloor.
\end{equation}
Then for any $\alpha \in (a/b,c/d)$, we show that it must be the case that
\[
 j_{\bt}-\pi(0)\alpha \leq  \beta < j_{\tp}-\pi(n)\alpha.
\]
Notice that if $\beta = j_{\bt}-\pi(0)\alpha$, then $\pi(0)\alpha+\beta=j_{\bt}$ is an integer, and so $f_{\alpha,\beta}(\pi(0)) = 0$. Likewise, if $\beta=j_{\tp}-\pi(n)\alpha$, then we have $f_{\alpha,\beta}(\pi(n)) = 0$.

In other words, the domain $S(\pi)$ is the region bounded by lines $\alpha=a/b$, $\alpha=c/d$, $\pi(0)\alpha+\beta = j_{\bt}$, and $\pi(n)\alpha+\beta = j_{\tp}$. Two such oblique lines intersect at $\alpha = (j_{\tp}-j_{\bt})/(\pi(n)-\pi(0))$. Since all the integers in this expression are at most $n$, any points of intersection for the lines must occur at rational numbers whose reduced form has denominator at most $n$. That is, the intersection of the lines cannot occur in the interior of a Farey interval.
The region $S(\pi)$ is a triangle if $(j_{\tp}-j_{\bt})/(\pi(n)-\pi(0))$ equals $a/b$ or $c/d$; otherwise, it is a trapezoid.

\begin{thm}\label{thm:pidomain}
Let $\alpha \in (a/b,c/d)$ be a Farey interval and let $\pi = \pi_{\alpha,\beta}^{(n)} \in \Sos_n$. The domain $S(\pi)$ is bounded on the left by the line $\alpha=a/b$, on the right by the line $\alpha=c/d$, below by the line $\pi(0)\alpha+\beta=j_{\bt}$, and above by the line $\pi(n)\alpha+\beta=j_{\tp}$. The region $S(\pi)$ has area
 \[
  \frac{1}{bd}\left(j_{\tp}-j_{\bt}+\left(\frac{a}{b}+\frac{c}{d}\right)\left(\frac{\pi(0)-\pi(n)}{2}\right)\right).
 \]
\end{thm}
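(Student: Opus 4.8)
The plan is to first determine the full boundary of $S(\pi)$ — thereby justifying the description given just before the theorem statement — and then obtain the area by integrating the vertical extent of the region. The left and right boundaries are essentially Sur\'anyi's bijection: by Lemma~\ref{lem:cycle} together with Theorem~\ref{thm:suranyi}, the two entries $b=\pi(k+1)$ and $d=\pi(k-1)$ flanking the $0$ of $\pi$ determine the Farey interval $(a/b,c/d)$ that must contain $\alpha$, and Proposition~\ref{prp:interval} gives the converse, that every $\alpha$ in this interval yields a cyclic shift of $\pi$. So the real work is locating, for each such $\alpha$, the range of admissible $\beta$.

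Fix $\alpha\in(a/b,c/d)$ and let $\sigma=\pi_{\alpha,0}^{(n)}$, the unique cyclic shift of $\pi$ with $\sigma(0)=0$; since $\alpha$ lies strictly inside a Farey interval, the values satisfy $0=f_{\alpha,0}(\sigma(0))<f_{\alpha,0}(\sigma(1))<\cdots<f_{\alpha,0}(\sigma(n))$ with all inequalities strict. Increasing $\beta$ from $0$ rotates the $n+1$ points around the circle of Figure~\ref{fig:circle}; because the largest values wrap past $1$ first, the point carrying the smallest value is $\sigma(0)$ for $\beta\in[0,\,1-f_{\alpha,0}(\sigma(n)))$, then becomes $\sigma(n)$, then $\sigma(n-1)$, and so on down the $\sigma$-ordered list. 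Consequently $\pi_{\alpha,\beta}^{(n)}=\pi$ exactly when the minimum is attained at $\pi(0)$, which happens for $\beta$ in a half-open interval with right endpoint $1-f_{\alpha,0}(\pi(n))$ and left endpoint $1-f_{\alpha,0}(\pi(0))$ — or left endpoint $0$ in the degenerate case $\pi(0)=0$. Writing $f_{\alpha,0}(m)=\alpha m-\lfloor\alpha m\rfloor$, these endpoints become $\bigl(1+\lfloor\alpha\pi(n)\rfloor\bigr)-\pi(n)\alpha$ and $\bigl(1+\lfloor\alpha\pi(0)\rfloor\bigr)-\pi(0)\alpha$.

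To match this with \eqref{eq:jtop} I must show $\lfloor\alpha m\rfloor=\lfloor\tfrac ab m\rfloor$ for $m=\pi(0),\pi(n)$ and every $\alpha\in(a/b,c/d)$. This is the one place where the Farey structure really enters: any integer value $j$ of $\alpha m$ would give $\alpha=j/m\in F^{(n)}$ (since $m\le n$), which cannot lie strictly inside a Farey interval, so $\alpha m$ is never an integer on $(a/b,c/d)$; hence $\lfloor\alpha m\rfloor$ is constant there, and letting $\alpha\downarrow a/b$ identifies the constant as $\lfloor\tfrac ab m\rfloor$. Comparing with \eqref{eq:jtop}, the left and right endpoints of the $\beta$-range are precisely $j_{\bt}-\pi(0)\alpha$ and $j_{\tp}-\pi(n)\alpha$ (the case $\pi(0)=0$ matching $j_{\bt}=0$), and since $f_{\alpha,0}(\pi(n))>f_{\alpha,0}(\pi(0))$ whenever $\pi(0)\ne 0$, the vertical extent is positive throughout the open interval. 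This gives the boundary description in the theorem, with the interior of $S(\pi)$ equal to $\{\,a/b<\alpha<c/d,\ j_{\bt}-\pi(0)\alpha<\beta<j_{\tp}-\pi(n)\alpha\,\}$.

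The area then follows by integrating the height over $\alpha$:
\[
 \int_{a/b}^{c/d}\Bigl[\bigl(j_{\tp}-\pi(n)\alpha\bigr)-\bigl(j_{\bt}-\pi(0)\alpha\bigr)\Bigr]\,d\alpha
 =(j_{\tp}-j_{\bt})\Bigl(\tfrac cd-\tfrac ab\Bigr)+\bigl(\pi(0)-\pi(n)\bigr)\frac{(c/d)^2-(a/b)^2}{2},
\]
and substituting $c/d-a/b=1/(bd)$ from part~\ref{prp:fareyfacts2} of Proposition~\ref{prp:fareyfacts} and factoring the difference of squares gives exactly the claimed formula; this single computation handles both the trapezoidal case and the degenerate triangular case in which the two oblique boundary lines meet at $a/b$ or $c/d$. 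I expect the only genuinely delicate part to be the rotation bookkeeping in the second paragraph — correctly tracking which cyclic shift of $\sigma$ governs each interval of $\beta$ values and handling the $\pi(0)=0$ boundary case — since the floor identity is a short Farey argument and the area is an elementary integral.
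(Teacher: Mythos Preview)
Your proposal is correct and follows essentially the same route as the paper: the left/right boundaries come from Sur\'anyi's bijection via Lemma~\ref{lem:cycle}, the $\beta$-range is obtained by the rotation/cyclic-shift argument (this is exactly the content of the paper's Proposition~\ref{prp:toplines}), and your floor identity $\lfloor \alpha m\rfloor=\lfloor (a/b)m\rfloor$ is the paper's Lemma~\ref{lem:floor}, argued with the same Farey obstruction. The only notable addition is that you actually carry out the area integral, which the paper leaves implicit; your phrasing ``letting $\alpha\downarrow a/b$'' for the floor constant is slightly informal but easily made precise, since $\lfloor \alpha m\rfloor$ is right-continuous at $a/b$.
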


For example, suppose we pick a random S\'os permutation with $n=9$ by choosing the point $(\alpha,\beta)=(.42,.31)$. We find $\pi=9 2 7 0 5 3 8 1 6 4$, and we wish to describe the region containing all points that correspond to this permutation. We have $b=5$ and $d=7$, and from the Euclidean algorithm we find $a=2$ and $c=3$, so we know $\alpha$ lives in the Farey interval $(2/5,3/7)$. This allows us to compute $j_{\bt} = 1 + \lfloor (2/5)\cdot 9\rfloor = 4$ and $j_{\tp}=1+\lfloor (2/5)\cdot 4\rfloor =2$. This gives the region bounded on the left by $\alpha =2/5$, on the right by $\alpha=3/7$, below by $9\alpha+\beta = 4$, and above by $4\alpha + \beta = 2$. The area of this region is
 \[
  \frac{1}{35}\left(2-4+\left(\frac{2}{5}+\frac{3}{7}\right)\left(\frac{9-4}{2}\right)\right) = \frac{1}{490}=.002040816\ldots,
 \]
so the probability of a random point in $[0,1)^2$ yielding this permutation is a bit more than $0.2\%$.

Theorem \ref{thm:domains} follows from Theorem \ref{thm:pidomain} by letting the latter theorem range over all $\pi\in \Sos_n$, so we focus on Theorem \ref{thm:pidomain}.

Let $\pi =\pi_{\alpha,\beta}^{(n)}$ be a S\'os permutation. By Lemma \ref{lem:cycle}, we know $\pi = \pi'\circ c^k$ for some integer $k$, where $\pi'=\pi_{\alpha,0}^{(n)}$ and $c$ is the cyclic shift operator. In particular, $\pi'(0)=0$, and so $\pi(k) = 0$. By Theorem \ref{thm:suranyi}, we know $\alpha$ lies in a unique Farey interval $(a/b,c/d)$. Moreover, by Proposition \ref{prp:denoms}, we know $b=\pi'(1)=\pi(k+1)$ and $d=\pi'(n)=\pi'(k-1)$.

Having established the interval in which $\alpha$ lives in terms of $\pi$, we move on to the relationship between $k$ and $\beta$. To finish off Theorem \ref{thm:pidomain}, we need to prove the following proposition.

\begin{prop}\label{prp:toplines}
Suppose $\pi=\pi_{\alpha,\beta}^{(n)}$ is a S\'os permutation corresponding to the Farey interval $(a/b,c/d)$. Then
\[
j_{\bt} - \pi(0)\alpha \leq \beta < j_{\tp} - \pi(n)\alpha,
\]
where $j_{\bt}$ and $j_{\tp}$ are as defined in \eqref{eq:jtop}.
\end{prop}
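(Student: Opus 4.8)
\section*{Proof proposal for Proposition~\ref{prp:toplines}}

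The plan is to reduce the proposition to two elementary ingredients: first, that the integer $\lfloor p\alpha\rfloor$ is constant as $\alpha$ ranges over the open Farey interval $(a/b,c/d)$, with this constant equal to $\lfloor pa/b\rfloor$; and second, that $\pi(0)$ is an index minimizing $f_{\alpha,\beta}$ and $\pi(n)$ an index maximizing it, so in particular
\[
 f_{\alpha,\beta}(\pi(0))\leq f_{\alpha,\beta}(0)=\beta\leq f_{\alpha,\beta}(\pi(n)).
\]

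For the first ingredient, fix an integer $0\le p\le n$. The non-decreasing step function $\alpha\mapsto\lfloor p\alpha\rfloor$ jumps precisely where $p\alpha$ meets an integer, that is, at points $\alpha=m/p$ with $m\in\Z$; in lowest terms such a point has denominator dividing $p\le n$, so it lies in $F^{(n)}$ and hence cannot lie strictly between the consecutive Farey fractions $a/b<c/d$. Therefore $\lfloor p\alpha\rfloor$ is constant on $(a/b,c/d)$, and by right-continuity of the floor this constant equals its value just to the right of $a/b$, namely $\lfloor pa/b\rfloor$. Consequently $f_{\alpha,0}(p)=p\alpha-\lfloor pa/b\rfloor$ throughout the interval, and when $p\neq 0$ this is strictly positive, since $f_{\alpha,0}(p)=0$ would force $\alpha=\lfloor pa/b\rfloor/p\in F^{(n)}$, impossible in the open interval. (This is the mechanism already behind Observation~\ref{obs:h} and Proposition~\ref{prp:interval}.) Applying this with $p=\pi(n)$ and recalling \eqref{eq:jtop} gives $j_{\tp}-\pi(n)\alpha=1+\lfloor\pi(n)\alpha\rfloor-\pi(n)\alpha=1-f_{\alpha,0}(\pi(n))$, valid even when $\pi(n)=0$; and with $p=\pi(0)$ it gives $j_{\bt}-\pi(0)\alpha=1-f_{\alpha,0}(\pi(0))$ whenever $\pi(0)\neq 0$, while if $\pi(0)=0$ then $j_{\bt}=0$ and the asserted lower bound is simply $0\le\beta$. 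So it suffices to prove
\[
 1-f_{\alpha,0}(\pi(0))\leq\beta\quad(\text{when }\pi(0)\neq 0),\qquad\text{and}\qquad \beta< 1-f_{\alpha,0}(\pi(n)).
\]

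For the lower bound, write $u=f_{\alpha,0}(\pi(0))\in(0,1)$; since $u,\beta<1$, the value $f_{\alpha,\beta}(\pi(0))$ equals $u+\beta$ if $u+\beta<1$ and equals $u+\beta-1$ otherwise. In the first case $f_{\alpha,\beta}(\pi(0))=u+\beta>\beta=f_{\alpha,\beta}(0)$, contradicting that $\pi(0)$ minimizes $f_{\alpha,\beta}$; hence $u+\beta\ge 1$, i.e.\ $\beta\ge 1-u$. For the upper bound, write $v=f_{\alpha,0}(\pi(n))\in[0,1)$; then $f_{\alpha,\beta}(\pi(n))$ equals $v+\beta$ if $v+\beta<1$ and $v+\beta-1$ otherwise. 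In the second case $f_{\alpha,\beta}(\pi(n))=v+\beta-1<\beta=f_{\alpha,\beta}(0)$ (using $v<1$), contradicting that $\pi(n)$ maximizes $f_{\alpha,\beta}$; hence $v+\beta<1$, i.e.\ $\beta<1-v$. This completes the reduction, hence the proof.

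I expect the only delicate point to be the first ingredient — pinning down $\lfloor p\alpha\rfloor$ uniformly across the entire Farey interval and evaluating it at the left endpoint — but this is essentially a repackaging of the argument already used for Proposition~\ref{prp:interval}. Everything afterward is immediate from the extremal description of $\pi(0)$ and $\pi(n)$ together with elementary bookkeeping of fractional parts; the only mild nuisance is the degenerate case $\pi(0)=0$, which is exactly why \eqref{eq:jtop} carries a separate clause for $j_{\bt}$ (whereas $\pi(n)=0$ needs no separate treatment, since then $v=0$ and the upper bound reads $\beta<1$).
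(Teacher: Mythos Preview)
Your proof is correct and lands on the same core identity as the paper: both arguments reduce to showing $1-f_{\alpha,0}(\pi(0))\le\beta<1-f_{\alpha,0}(\pi(n))$ and then invoke the floor--constancy fact $\lfloor p\alpha\rfloor=\lfloor pa/b\rfloor$ on a Farey interval (the paper isolates this as Lemma~\ref{lem:floor}, which you reprove inline as your ``first ingredient''). The one genuine difference is how the inequalities on $\beta$ are obtained. The paper routes through the cyclic-shift description (Lemma~\ref{lem:cycle}): it sets $d_i=1-f_{\alpha,0}(\pi'(n-i))$, asserts that $\beta'\in[d_{i-1},d_i)$ forces $\pi_{\alpha,\beta'}=\pi'\circ c^i$, and reads off $d_{k-1}\le\beta<d_k$. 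You instead extract the same inequalities directly from the extremal meaning of $\pi(0)$ and $\pi(n)$, comparing $f_{\alpha,\beta}(\pi(0))$ and $f_{\alpha,\beta}(\pi(n))$ to $f_{\alpha,\beta}(0)=\beta$ and doing a two-case split on whether $u+\beta$ (resp.\ $v+\beta$) crosses $1$. Your route is a touch more self-contained, since it avoids the ``straightforward to verify'' step about which cyclic shift each $\beta$-subinterval produces; the paper's route, on the other hand, yields the full partition of the vertical segment into the $n+1$ pieces $[d_{i-1},d_i)$ all at once, which feeds directly into Observation~\ref{lem:vert_coord} and the three-areas discussion.
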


Our proof will rely on the following lemma.

\begin{lem}\label{lem:floor}
For any $i=0,1,\ldots,n$, and any $\alpha$ in the Farey interval $(a/b,c/d)$, $1-(i\alpha \mod 1) = 1+\lfloor i\cdot a/b\rfloor-i\alpha$.
\end{lem}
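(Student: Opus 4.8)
The plan is to reduce the statement to the single claim that $\lfloor i\alpha\rfloor = \lfloor ia/b\rfloor$ for every $i \in \{0,1,\ldots,n\}$ and every $\alpha$ in the Farey interval $(a/b,c/d)$. Indeed, since $i\alpha \bmod 1 = i\alpha - \lfloor i\alpha\rfloor$, we have $1 - (i\alpha \bmod 1) = 1 - i\alpha + \lfloor i\alpha\rfloor$, and this equals $1 + \lfloor ia/b\rfloor - i\alpha$ exactly when $\lfloor i\alpha\rfloor = \lfloor ia/b\rfloor$. The case $i=0$ is immediate, so I would assume $1 \le i \le n$ from here on.

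Next I would record the easy half: since $a/b < \alpha$ we have $\lfloor ia/b\rfloor \le ia/b < i\alpha$, hence $\lfloor ia/b\rfloor \le \lfloor i\alpha\rfloor$. For the reverse inequality, suppose toward a contradiction that $\lfloor i\alpha\rfloor \ge \lfloor ia/b\rfloor + 1 =: m$. Then $m$ is an integer with $m \le i\alpha$; moreover $m \ge 1$ (because $ia/b \ge 0$) and $m \le i\alpha < i$ (because $\alpha < 1$), so $0 < m < i$. Also $ia/b < m$ gives $a/b < m/i$. Now $m/i$ is a rational number in $(0,1)$ whose reduced denominator divides $i$ and is therefore at most $i \le n$, so the reduced form of $m/i$ is a term of $F^{(n)}$ strictly greater than $a/b$. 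Since $a/b$ and $c/d$ are consecutive in $F^{(n)}$, every term of $F^{(n)}$ exceeding $a/b$ is at least $c/d$, so $m/i \ge c/d$. But then $\alpha \ge m/i \ge c/d$, contradicting $\alpha < c/d$. Hence $\lfloor i\alpha\rfloor = \lfloor ia/b\rfloor$, and the lemma follows.

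There is no genuinely hard step here; the only points requiring care are bookkeeping ones. One must check that the manufactured fraction $m/i$ really does come from a term of $F^{(n)}$ — this uses $i \le n$ together with the bounds $0 < m < i$ — and one must keep the inequalities strict in the right places, so that the defining property of a Farey interval (that nothing in $F^{(n)}$ lies strictly between $a/b$ and $c/d$, cf.\ Proposition \ref{prp:fareyfacts}) can be invoked. Everything else is a direct unwinding of the floor function.
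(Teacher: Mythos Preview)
Your proof is correct and follows essentially the same approach as the paper: both reduce the lemma to showing $\lfloor i\alpha\rfloor=\lfloor i a/b\rfloor$, then obtain a contradiction by observing that $(\lfloor ia/b\rfloor+1)/i$ would be a fraction with denominator at most $n$ lying strictly between $a/b$ and $c/d$, violating the Farey adjacency. The only cosmetic difference is that the paper phrases the endgame as proving $i\cdot c/d\le \lfloor ia/b\rfloor+1$ directly, whereas you route the contradiction through $\alpha\ge m/i\ge c/d$; the content is identical.
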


\begin{proof}
It is clearly equivalent to prove the following:
\[
 i\alpha-\left\lfloor i\cdot \frac{a}{b}\right\rfloor = i \alpha \mod 1,
\]
which follows if $\lfloor i \cdot a/b  \rfloor = \lfloor i\alpha \rfloor$. Let $j=\lfloor i\cdot a/b\rfloor$. Since $a/b < \alpha < c/d$, we have $j\leq i\cdot a/b < i\cdot \alpha < i\cdot c/d$. We will prove that $i\cdot c/d \leq j+1$, from which the lemma follows.

Suppose not. If $i\cdot c/d > j+1$, then there exists an $r$ such that $a/b < r < c/d$ and $i\cdot r = j+1$. But then $r=(j+1)/i$ is a rational number between $a/b$ and $c/d$ with denominator $i \leq n$, contradicting the assumption that $(a/b,c/d)$ is an interval in $F^{(n)}$.
\end{proof}

\begin{proof}[Proof of Proposition \ref{prp:toplines}]
Let $\pi=\pi_{\alpha,\beta}^{(n)}$ be a S\'os permutation with $\pi(k)=0$, i.e., let $\pi = \pi'\circ c^k$, where $\pi'(0)=0$.

Let $f$ be the function $f=f_{\alpha,0}= \alpha x \mod 1$, and let $d_i = 1-f(\pi'(n-i))$, where $d_{-1}=0$. Then it is straightforward to verify that, for $\beta' \in [d_{i-1}, d_i)$, we have $\pi_{\alpha,\beta'}^{(n)} = \pi'\circ c^i$. Thus $d_{k-1} \leq \beta < d_k$.

But $d_{k-1} = 1-f(\pi'(n-k+1)) = 1-f(\pi(0))$ and $d_k = 1-f(\pi'(n-k))=1-f(\pi(n))$, and the proposition now follows from Lemma \ref{lem:floor}.
\end{proof}

As a corollary to Theorem \ref{thm:pidomain}, we can see that a S\'os permutation $\pi \in \Sos_n$ determines either one, two, or three permutations in $\Sos_{n+1}$, according to whether the domain $S(\pi)$ contains portions of the lines $\alpha = a/(n+1)$ with $\gcd(a,n+1)=1$, or $(n+1)\alpha + \beta = j$ for some $j=1,\ldots,n+1$. For example, in passing from $n=2$ to $n+1=3$ we see $S(120) = S(1230) \cup S(1203) \cup S(3120)$, while $S(201)=S(2301)\cup S(2031)$. However, starting with $n=3$, we find no lines involving $n+1=4$ that pass through $S(3120)$, so $S(3120)=S(31420)$. See Figures \ref{fig:F3regions} and \ref{fig:F4regions}.

\subsection*{The three areas theorem}

Theorem \ref{thm:pidomain} also leads us to a result that is reminiscent of the three gaps theorem. It turns out that among all S\'os permutations $\pi$ whose domain lies in the vertical strip $(a/b, c/d)\times [0,1)$, there are at most three possible values for the area of the domains $S(\pi)$.

In order to prove this, we first revisit the idea in the proof of Proposition \ref{prp:toplines} with a fixed $\alpha$. We let $f(x) = \alpha x \mod 1$ and let $\pi = \pi_{\alpha,0}$. Further we define $\beta_k = 1-f(\pi(k))$, for each $k=0,1,\ldots,n-1$, to be the vertical coordinates of the points at which we cross the diagonal domain boundary lines. Then we have
\[
 \beta_k - \beta_{k+1} = f(\pi(k+1)) - f(\pi(k)) = \delta_k.
\]
That is, the gaps between the diagonal lines intersecting the vertical line at $\alpha$ are precisely the gaps $\delta_k$ from the three gaps theorem.

\begin{obs}\label{lem:vert_coord}
Let $\alpha' \in [a/b,c/d)$. Then the diagonal lines of the form $i\alpha +\beta =j$ with $1\leq j\leq i \leq n$ that cross the vertical line $\alpha=\alpha'$ with $\beta \in [0,1)$ do so with vertical coordinates $\beta_k = \sum_{j=k}^n \delta_j$. In particular, if $\alpha'=a/b$, these coordinates are $1/b, 2/b,\mathellipsis, 1$.
\end{obs}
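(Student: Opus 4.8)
The plan is to compute directly, for a fixed $\alpha'\in[a/b,c/d)$, the heights at which the diagonal lines $i\alpha+\beta=j$ cross the vertical line $\alpha=\alpha'$, and then to recognize that finite set of heights as $\{\beta_1,\dots,\beta_n\}$ via a telescoping identity. Write $f(x)=\alpha'x\bmod 1$ and $\pi=\pi_{\alpha',0}^{(n)}$, so $\pi(0)=0$ and $\pi$ restricts to a bijection of $\{1,\dots,n\}$ onto itself, and extend the notation of the preceding paragraph by $\beta_n=1-f(\pi(n))=\delta_n$. A line $i\alpha+\beta=j$ meets $\alpha=\alpha'$ at $\beta=j-i\alpha'$, and this height lies in $[0,1)$ precisely when $j$ is the unique integer in $[i\alpha',\,i\alpha'+1)$. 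First I would observe that, since $0\le\alpha'<1$ and $i\ge 1$, this constraint already forces $1\le j\le i$; hence each $i\in\{1,\dots,n\}$ contributes exactly one crossing in the strip, at height equal to the representative in $[0,1)$ of $-i\alpha'$ modulo $1$. For $\alpha'$ in the open interval this representative is $1-f(i)$, which lies in $(0,1)$ because no $m/i$ with $i\le n$ falls in the open Farey interval; at the left endpoint $\alpha'=a/b$ it equals $1-f(i)$ when $b\nmid i$ and $0$ when $b\mid i$ (the only $i$ with $i\alpha'\in\Z$).

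Next I would reindex and telescope. Since $\pi$ permutes $\{1,\dots,n\}$, the set of crossing heights equals $\{\,1-f(\pi(k)):1\le k\le n\,\}$. Recalling $\delta_j=f(\pi(j+1))-f(\pi(j))$ for $j<n$, a one-line telescoping gives
\[
 \sum_{j=k}^n\delta_j=\bigl(f(\pi(n))-f(\pi(k))\bigr)+\bigl(1-f(\pi(n))\bigr)=1-f(\pi(k))=\beta_k ,
\]
so the crossing heights are exactly $\beta_1,\dots,\beta_n$ (reading $\beta$-coordinates modulo $1$, a caveat needed only at the endpoint, where a crossing at height $0$ is recorded as $\beta_k=1$). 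This proves the first assertion.

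For the ``in particular'' clause I would set $\alpha'=a/b$ and invoke Observation~\ref{obs:distinct}, so that $\delta_k\in\{0,1/b\}$ for every $k$; as the gaps sum to $1$, exactly $b$ of them equal $1/b$. Therefore the suffix sums $\beta_k=\sum_{j=k}^n\delta_j$ increase in steps of $0$ or $1/b$ from $\beta_n=\delta_n=1/b$ up to $\sum_{j=0}^n\delta_j=1$ as $k$ decreases from $n$ to $0$, so $\{\beta_k\}=\{1/b,2/b,\dots,1\}$. The only genuinely delicate point in the whole argument is this endpoint $\alpha'=a/b$: there $f(0),\dots,f(n)$ are no longer distinct, a multiple-of-$b$ diagonal line meets $\alpha=a/b$ literally at $\beta=0$ rather than near $\beta=1$, and one must check carefully that the ``unique integer in $[i\alpha',i\alpha'+1)$'' still yields the claimed heights and that, after identifying $\beta$ with $\beta+1$, the list of heights is exactly $1/b,2/b,\dots,1$. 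Everything away from the endpoint is immediate from the formula $\beta=1-f(i)$ together with the telescoping identity.
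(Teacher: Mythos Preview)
Your proposal is correct and follows essentially the same route the paper takes: the paper's justification is the short paragraph immediately preceding the observation, which defines $\beta_k=1-f(\pi(k))$ and notes $\beta_k-\beta_{k+1}=\delta_k$, leaving the telescoping and the endpoint computation implicit. You carry out exactly these steps, but more carefully—verifying that each $i$ contributes one crossing at height $1-f(i)$, reindexing through $\pi$, and invoking Observation~\ref{obs:distinct} for the $\alpha'=a/b$ case—so your argument is a fleshed-out version of the paper's sketch rather than a different approach.
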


We already know from Theorem \ref{thm:pidomain} that a domain $S(\pi)$ can be either a trapezoid or a triangle, and Observation \ref{lem:vert_coord} implies that the crossings of the vertical lines $\alpha = a/b$ and $\alpha = c/d$ are evenly spaced, of sizes $1/b$ and $1/d$, respectively.  Combining these ideas, we obtain the following theorem.

\begin{thm}[Three Areas Theorem] \label{thm:3probs}
Let $(a/b,c/d)$ be an interval in the Farey sequence $F^{(n)}$. Among all S\'os permutations $\pi \in \Sos_n$ whose domain $S(\pi)$ lies in the vertical strip $(a/b, c/d)\times [0,1)$, there are three possible areas for $S(\pi)$, and one of them is the sum of the other two. In particular, let $w=c/d-a/b=1/bd$ be the width of the Farey interval. Then the area of $S(\pi)$ is
\begin{itemize}
\item $d\cdot w^2/2$ when the region is a triangle with one side along the vertical line $\alpha=a/b$,
\item $b \cdot w^2/2$ when the region is a triangle with one side along the vertical line $\alpha=c/d$, and
\item $(b+d)\cdot w^2/2$ when the region is a trapezoid.
\end{itemize}
\end{thm}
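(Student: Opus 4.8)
The plan is to prove this geometrically, using the description of $S(\pi)$ from Theorem~\ref{thm:pidomain} together with the even spacing of the diagonal-line crossings recorded in Observation~\ref{lem:vert_coord}. Throughout, write $w = c/d - a/b$, which by part~\ref{prp:fareyfacts2} of Proposition~\ref{prp:fareyfacts} equals $1/(bd)$, so that $1/b = dw$ and $1/d = bw$. \textbf{Step 1 (no vertical lines cut the open strip).} Since $(a/b,c/d)$ is an interval of $F^{(n)}$, no fraction with denominator at most $n$ lies strictly between $a/b$ and $c/d$; hence, inside the open strip $(a/b,c/d)\times[0,1)$, the only lines of the arrangement of Theorem~\ref{thm:domains} are the diagonal lines $i\alpha+\beta=j$, $1\le j\le i\le n$ (together with the bottom edge $\beta=0$ of the square, which is exactly the degenerate "diagonal line" $\pi(0)\alpha+\beta=0$ that appears as the lower boundary when $\pi(0)=0$).

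\textbf{Step 2 (each $S(\pi)$ in the strip is a trapezoid or a triangle with a side on one wall).} Fix a S\'os permutation $\pi$ whose domain lies in the strip. By Theorem~\ref{thm:pidomain}, $S(\pi)$ is bounded on the left by $\alpha=a/b$, on the right by $\alpha=c/d$, below by the line $\ell_{\bt}\colon \pi(0)\alpha+\beta=j_{\bt}$, and above by the line $\ell_{\tp}\colon \pi(n)\alpha+\beta=j_{\tp}$. Because $n\ge1$ forces $\pi(0)\ne\pi(n)$, the lines $\ell_{\bt}$ and $\ell_{\tp}$ are not parallel; they meet at $\alpha=(j_{\tp}-j_{\bt})/(\pi(n)-\pi(0))$, a rational whose reduced denominator is at most $n$, hence a value $\le a/b$ or $\ge c/d$ (it cannot lie in the interior of a Farey interval). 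So $\ell_{\bt}$ and $\ell_{\tp}$ do not cross inside the closed strip, and $S(\pi)$ is one of exactly three shapes: a trapezoid whose two parallel (vertical) sides lie on the walls $\alpha=a/b$ and $\alpha=c/d$, when the meeting point lies strictly outside $[a/b,c/d]$; a triangle with one side on $\alpha=a/b$ and the opposite vertex on $\alpha=c/d$, when the meeting point is exactly $c/d$; or a triangle with one side on $\alpha=c/d$ and the opposite vertex on $\alpha=a/b$, when the meeting point is exactly $a/b$.

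\textbf{Step 3 (the wall-side has length $1/b$ or $1/d$) and the area computation.} By Observation~\ref{lem:vert_coord}, the diagonal lines meet the left wall $\alpha=a/b$ at heights $1/b,2/b,\dots,1$ and the right wall $\alpha=c/d$ at heights $1/d,2/d,\dots,1$; together with $\beta=0$ these partition each wall into consecutive segments of length $1/b$, respectively $1/d$. If the boundary of $S(\pi)$ meets, say, the left wall in a segment, its endpoints are the crossings of $\ell_{\bt}$ and $\ell_{\tp}$ with that wall, and no other diagonal line can cross the wall strictly between them: such a line would enter the interior of $S(\pi)$ just to the right of the wall, contradicting that $S(\pi)^{\circ}$ is a connected component of the complement of the arrangement (Theorem~\ref{thm:domains}). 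Hence that segment is one of the length-$1/b$ pieces, and likewise on the right wall. Now compute: a trapezoid with vertical parallel sides of lengths $1/b=dw$ and $1/d=bw$ at horizontal distance $w$ has area $\tfrac12(dw+bw)w=\tfrac12(b+d)w^{2}$; a triangle with a side of length $1/b=dw$ on $\alpha=a/b$ and apex on $\alpha=c/d$ has area $\tfrac12\cdot dw\cdot w=\tfrac12 d\,w^{2}$; and a triangle with a side of length $1/d=bw$ on $\alpha=c/d$ and apex on $\alpha=a/b$ has area $\tfrac12\cdot bw\cdot w=\tfrac12 b\,w^{2}$. Since $\tfrac12(b+d)w^{2}=\tfrac12 b\,w^{2}+\tfrac12 d\,w^{2}$, these are the claimed three areas and the trapezoidal one is the sum of the two triangular ones. (One may double-check these three values by specializing the area formula in Theorem~\ref{thm:pidomain}.)

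\textbf{Main obstacle.} The substantive work is in Steps~2 and~3: one must be sure that $S(\pi)$ is delimited by \emph{exactly} the two walls and the two diagonal lines — no further edges contributed by other members of the arrangement — and that the side of $S(\pi)$ lying on a wall is a \emph{single} spacing rather than several. This is precisely where the Farey-interval hypothesis (Step~1, plus the location of the intersection point in Step~2) and the "connected component" characterization of Theorem~\ref{thm:domains} (Step~3) are needed; one should also check that the degenerate lower boundary $\beta=0$ (the case $\pi(0)=0$) is consistent with the $1/b$-spacing on the left wall. Once the three shapes are pinned down, the area identity is a one-line computation.
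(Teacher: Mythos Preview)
Your proof is correct and follows essentially the same approach as the paper: use Theorem~\ref{thm:pidomain} to identify each $S(\pi)$ as a trapezoid or triangle spanning the strip, use Observation~\ref{lem:vert_coord} to see that the diagonal lines cut the walls $\alpha=a/b$ and $\alpha=c/d$ into equal pieces of length $1/b$ and $1/d$, and conclude. The paper's argument is stated tersely in the sentence immediately preceding the theorem; your Steps~2 and~3 make explicit the two details the paper leaves implicit---that the bounding diagonals cannot cross in the open strip, and that the wall-side of $S(\pi)$ is exactly one spacing rather than several---so your write-up is a faithful fleshing-out of the same proof.
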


With the three areas theorem in hand, we can deduce a number of further consequences for S\'os permutations.

For example, taking the uniform distribution on $[0,1)^2$, we can say which S\'os permutations occur with the greatest and least probability by looking at largest and smallest domains. Since $1\leq b,d\leq n$, we see that the smallest and largest possible areas are $\frac{1}{2n^2(n-1)}$ and $\frac{1}{2n}$, respectively.  We further observe that these areas can only occur in the narrowest and widest strips. (The narrowest strips are $(\frac{1}{n},\frac{1}{n-1})$ and $(\frac{n-2}{n-1}, \frac{n-1}{n})$, while the widest are $(0,\frac{1}{n})$ and $(\frac{n-1}{n}, 1)$.) Combining this observation with Theorem \ref{thm:pidomain}, we obtain the following result.  The details are left to the reader.

\begin{corollary}
Among all $\pi \in \Sos_n$,
\[
 \frac{1}{2n^2(n-1)} \leq \ar(S(\pi)) \leq \frac{1}{2n}.
\]
The upper bound is attained by the identity permutation $012\cdots n$ and its reversal $n\cdots 210$. The lower bound is attained on the following set of four permutations:
\[
 \{0 n 1 2 \ldots (n-1), (n-1) \ldots 2 1 n 0, 1 2 \ldots  (n-1) 0 n, n 0 (n-1) \ldots 2 1\}.
\]
\end{corollary}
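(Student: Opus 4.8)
The plan is to reduce the statement to a finite optimization over Farey intervals using the Three Areas Theorem (Theorem \ref{thm:3probs}), and then to pin down the extremizing permutations by counting the regions of each shape inside a strip. Writing $w=1/(bd)$ for the width of the strip $(a/b,c/d)$, Theorem \ref{thm:3probs} says that the area of any $S(\pi)$ in that strip is one of
\[
 \frac{dw^2}{2}=\frac{1}{2b^2d}\ \ (\text{left triangle}),\quad \frac{bw^2}{2}=\frac{1}{2bd^2}\ \ (\text{right triangle}),\quad \frac{(b+d)w^2}{2}=\frac{b+d}{2b^2d^2}\ \ (\text{trapezoid}).
\]
The admissible denominators are governed by Proposition \ref{prp:fareyfacts}: one has $1\le b,d\le n$, $b\neq d$ (part \ref{prp:fareyfacts1}, valid since $n\ge 2$), and $b+d>n$ (part \ref{prp:fareyfacts4}); moreover a trapezoid occurs only when $b+d\ge n+2$, since $b+d=n+1$ leaves only two gap sizes by Observation \ref{obs:distinct}. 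Thus both bounds become elementary extremal problems in the integers $b,d$.

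For the upper bound I would prove $\ar(S(\pi))\le 1/(2n)$ shape by shape. For a left triangle the claim is $b^2d\ge n$: if $b=1$ then $b+d>n$ forces $d=n$ and $b^2d=n$, while if $b\ge 2$ then $2bd\ge b+d>n$ gives $b^2d\ge 2bd>n$; the right-triangle case is symmetric. For a trapezoid, $b,d\ge 2$ forces $bd\ge b+d\ge n+2>n$, whence $(bd)^2\ge(b+d)\,bd>n(b+d)$ and the area is strictly below $1/(2n)$. Equality therefore forces a left triangle with $(b,d)=(1,n)$ or a right triangle with $(b,d)=(n,1)$, i.e.\ the strips $(0/1,1/n)$ and $((n-1)/n,1/1)$. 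Each such strip has $n+1$ regions (Lemma \ref{lem:cycle}) and only two possible areas, so solving the two linear relations ``number of regions $=n+1$'' and ``total area $=1/n$'' shows that exactly one region in it has area $1/(2n)$. A direct application of Theorem \ref{thm:pidomain} identifies these two regions as the identity $012\cdots n$ (where $\pi(0)=0$, $\pi(n)=n$, so $j_{\bt}=0$, $j_{\tp}=1$) and its reversal.

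For the lower bound I would prove $\ar(S(\pi))\ge 1/(2n^2(n-1))$. For a left triangle this is $b^2d\le n^2(n-1)$, which holds because $b\le n$ together with $b\neq d$ gives $d\le n-1$ when $b=n$ (and otherwise $b\le n-1$ yields $b^2d\le n(n-1)^2<n^2(n-1)$), with equality exactly at $(b,d)=(n,n-1)$; the right-triangle case is symmetric, with equality at $(b,d)=(n-1,n)$. For a trapezoid, $(bd)^2\le n^2(n-1)^2<n^2(n-1)(n+2)\le n^2(n-1)(b+d)$ is strict, so trapezoids never attain the minimum. The equality cases live in the strips $(1/n,\,1/(n-1))$ and $((n-2)/(n-1),\,(n-1)/n)$, each of which one checks is a genuine Farey interval (coprime denominators with $c/d-a/b=1/bd$ and mediant denominator $2n-1>n$). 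To count the minimizers I would record, from Observation \ref{lem:vert_coord}, that the left edge $\alpha=a/b$ of a strip is cut into $b$ equal segments and the right edge $\alpha=c/d$ into $d$ equal segments, each segment being an edge of a unique region; since left triangles meet only the left edge, right triangles only the right edge, and trapezoids both, one gets $\#\{\text{left triangles}\}=n+1-d$, $\#\{\text{right triangles}\}=n+1-b$, and $\#\{\text{trapezoids}\}=b+d-n-1$. This gives two minimal (left-triangle) regions in $(1/n,1/(n-1))$ and two minimal (right-triangle) regions in $((n-2)/(n-1),(n-1)/n)$, four in all, which I would then exhibit explicitly using S\'os's recurrence (Theorem \ref{thm:sos}) for the base permutations and Theorem \ref{thm:pidomain} for their areas, obtaining $0n12\cdots(n-1)$ and $12\cdots(n-1)0n$ in the first strip and their reversals $(n-1)\cdots21n0$ and $n0(n-1)\cdots21$ in the second.

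The inequalities and the two area computations are routine; the step I expect to require the most care is the \emph{exact} enumeration of minimizers, namely establishing the segment-counting identities $\#\{\text{left triangles}\}=n+1-d$ etc.\ and matching them to the four named permutations, since the two relations from region count and total area alone do not determine the three shape-counts in a strip that supports trapezoids. A convenient labor-saving device throughout is the reversal involution $\pi\mapsto(\pi(n),\ldots,\pi(0))$, which preserves $\ar(S(\pi))$ while interchanging $b\leftrightarrow d$ and left triangles with right triangles; it pairs the identity with its reversal and each first-strip minimizer with a second-strip minimizer, so in principle only the ``left'' half of each computation need be done by hand.
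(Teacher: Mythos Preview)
Your proposal is correct and follows essentially the same route the paper sketches: it derives the bounds from the Three Areas Theorem together with the constraints $1\le b,d\le n$, $b\neq d$, $b+d>n$, identifies the extremal strips as the widest and narrowest Farey intervals, and then pins down the specific permutations via Theorem~\ref{thm:pidomain}. The paper explicitly leaves these details to the reader, and your segment-counting argument (via Observation~\ref{lem:vert_coord}) to get $\#\{\text{left triangles}\}=n+1-d$, etc., is exactly the kind of verification the paper invites; it is a clean way to confirm there are precisely two maximizers and four minimizers rather than merely exhibiting candidates.
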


To finish, we remark that Observation \ref{lem:vert_coord} can also be used to draw a direct connection between the three gaps theorem and the three areas theorem.

First notice that on the interval $(a/b,c/d)$, each gap $\delta_k= \delta_k(\alpha)$ is a continuous function of $\alpha$, and these can be used to express the areas of the domains. To be precise, the area of $S(\pi\circ c^{n-k})$ (the $k$th domain from the top) is the following integral:
\begin{equation}\label{eq:integral}
 \int_{a/b}^{c/d} \delta_k(\alpha) \, d\alpha.
\end{equation}
By the three gaps theorem, we have
\[
 \delta_k(\alpha) = \begin{cases} \delta_0(\alpha) & \mbox{if } \pi(k)\leq n-b,\\
 \delta_0(\alpha)+\delta_n(\alpha) & \mbox{if } n-b < \pi(k) < d,\\
 \delta_n(\alpha) & \mbox{if } d\leq \pi(k).
 \end{cases}
\]
Thus, using the integrals in equation \eqref{eq:integral}, we can recover the three areas theorem.

In particular, we observe that if $\delta_k(\alpha) = \delta_0(\alpha)$, the domain is a triangular region with one side along the right edge of the vertical strip and if $\delta_k(\alpha) = \delta_n(\alpha)$, the domain is a triangular region with one side along the left edge of the vertical strip. Otherwise, if $\delta_k(\alpha) = \delta_0(\alpha)+\delta_n(\alpha)$, then the domain is a trapezoid.

From this perspective, Theorem \ref{thm:3probs} and Observation \ref{obs:distinct} yield the following corollary.

\begin{corollary}
In a vertical strip $(a/b,c/d)\times [0,1)$, there are trapezoidal domains if and only if $n+1 < b+d$.
\end{corollary}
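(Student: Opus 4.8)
The plan is to read off the corollary from the correspondence between gap sizes and domain shapes recorded immediately before the statement, together with Observation~\ref{obs:distinct}. First I would make that correspondence precise: fix $\alpha$ in the interior of the strip (the mediant $(a+c)/(b+d)$ is convenient) and write $\pi=\pi_{\alpha,0}^{(n)}$, so $\pi(0)=0$. By Lemma~\ref{lem:cycle}, every domain lying in the strip $(a/b,c/d)\times[0,1)$ is $S(\pi\circ c^{n-k})$ for a unique $k\in\{0,1,\ldots,n\}$, and by Theorem~\ref{thm:pidomain} together with Observation~\ref{lem:vert_coord} (as spelled out in the discussion preceding this corollary) such a domain is a trapezoid exactly when the gap function satisfies $\delta_k(\alpha)=\delta_0(\alpha)+\delta_n(\alpha)$ on $(a/b,c/d)$; otherwise it is a triangle, with a side on $\alpha=c/d$ if $\delta_k(\alpha)=\delta_0(\alpha)$ and a side on $\alpha=a/b$ if $\delta_k(\alpha)=\delta_n(\alpha)$. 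Hence the strip contains a trapezoidal domain if and only if $\delta_0(\alpha)+\delta_n(\alpha)=\delta_k(\alpha)$ for some $k$.

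Next I would convert this into a numerical condition via Theorem~\ref{thm:sos}: the gap $\delta_k$ takes the middle value $\delta_0+\delta_n$ precisely when $n-b<\pi(k)<d$. Since $\pi$ is a bijection of $\{0,1,\ldots,n\}$ and $\pi(n)=d$ by Proposition~\ref{prp:denoms}, such an index $k\le n-1$ exists if and only if the open interval $(n-b,d)$ contains an integer, i.e.\ $n-b\le d-2$, i.e.\ $b+d\ge n+2$. Since $b+d>n$ for every interval of $F^{(n)}$ (part~\ref{prp:fareyfacts4} of Proposition~\ref{prp:fareyfacts}), the only remaining case is $b+d=n+1$, for which $n-b=d-1$ and $(n-b,d)$ contains no integer; this dichotomy is exactly Observation~\ref{obs:distinct}. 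Combining this with the first paragraph, the strip has a trapezoidal domain if and only if $n+1<b+d$.

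Two routine checks complete the argument. First, one verifies that a ``trapezoid'' is never a degenerate triangle, i.e.\ that $\delta_0(\alpha)+\delta_n(\alpha)$ is a genuinely new gap value on the open interval: using $bc-ad=1$ (part~\ref{prp:fareyfacts2} of Proposition~\ref{prp:fareyfacts}) a one-line computation gives $\delta_0(\alpha)=b\alpha\bmod 1=b\alpha-a\in(0,1/d)$ and $\delta_n(\alpha)=1-(d\alpha\bmod 1)=c-d\alpha\in(0,1/b)$ for $a/b<\alpha<c/d$, both strictly positive. Second, when $b+d\ge n+2$ one has $b,d\ge 2$ (else $b+d\le n+1$), so the witness $\pi(k)=d-1$ lies in $\{1,\ldots,n-1\}$ and, since $\pi(n)=d\ne d-1$, it is attained at some index $k\le n-1$, so $\delta_k$ --- and hence its domain --- really does occur in the strip.

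I do not expect a genuine obstacle; the statement is essentially a repackaging of Theorem~\ref{thm:3probs}, Theorem~\ref{thm:sos}, and Observation~\ref{obs:distinct}. The one point that demands attention is the bookkeeping in the last paragraph: confirming that the index $k$ realizing the middle gap lies in $\{0,1,\ldots,n-1\}$, so that the corresponding region is genuinely one of the domains in the strip, and matching the ``middle case vacuous'' condition $n-b=d-1$ with $b+d=n+1$.
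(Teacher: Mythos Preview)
Your proposal is correct and follows essentially the same route as the paper: the paper simply observes that trapezoidal domains correspond to gaps with $\delta_k=\delta_0+\delta_n$ and then invokes Observation~\ref{obs:distinct} (together with Theorem~\ref{thm:3probs}) to conclude. You carry out exactly this argument, but with the bookkeeping made explicit---in particular your verification that the middle case $n-b<\pi(k)<d$ is nonempty for some $k\le n-1$ precisely when $b+d\ge n+2$ spells out what the paper leaves implicit in Observation~\ref{obs:distinct}.
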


While we have outlined many of the core features of S\'os permutations, there is much left undone. For example, O'Bryant \cite{Obryant} gives a formula for the sign of a S\'os permutation in terms of $\alpha$, and this could be extended to general $\alpha$ and $\beta$. Even better would be a formula for the number of inversions of $\pi_{\alpha,\beta}$ in terms of $\alpha$ and $\beta$. Boyd and Steele study longest increasing subsequences for a sequence of S\'os permutations with $\alpha$ fixed and $n\to \infty$. The distribution of longest increasing subsequences on the full symmetric group has connections to Plancharel measure on Young diagrams and eigenvalues of random matrices. What about this distribution restricted to the S\'os permutations?

In a different vein, one could envision other sets of permutations generated by functions on a cylinder. What about the ``quadratic'' permutations obtained by sorting the list $f(0), f(1), \ldots,f(n)$ with $f(x) = \alpha x^2 + \beta x + \gamma \mod 1$?

\end{document}